\numberwithin{equation}{section} \allowdisplaybreaks
\begin{document}
\newtheorem{theorem}{Theorem}[section]
\newtheorem{defin}{Definition}[section]
\newtheorem{prop}{Proposition}[section]
\newtheorem{corol}{Corollary}[section]
\newtheorem{lemma}{Lemma}[section]
\newtheorem{rem}{Remark}[section]
\newtheorem{example}{Example}[section]
\title{Generalized Sasaki metrics on tangent bundles}
\author{{\small by}\vspace{2mm}\\Izu Vaisman}
\date{}
\maketitle
{\def\thefootnote{*}\footnotetext[1]%
{{\it 2000 Mathematics Subject Classification: 53C15}.
\newline\indent{\it Key words and phrases}: Tangent bundle, Sasaki metric, Generalized geometry, Generalized metric.}}
\begin{center} \begin{minipage}{10cm}
A{\footnotesize BSTRACT. {We define a class of metrics that extend the Sasaki metric of a tangent manifold of a Riemannian manifold. The new metrics are obtained by the transfer of the generalized (pseudo-)Riemannian metrics of the pullback bundle $\pi^{-1}(TM\oplus T^*M)$, where $\pi:\mathcal{T}M\rightarrow M$ is the natural projection. We obtain the expression of the transferred metric and we define a canonical, metric connection with torsion. We calculate the torsion, curvature and Ricci curvature of this connection and give a few applications of the results. We also discuss the transfer of generalized complex and generalized K\"ahler structures from the pullback bundle to the tangent manifold.}}
\end{minipage}
\end{center} \vspace*{5mm}
\section{Introduction}
We use the typical notation of differential geometry, e.g., $\Gamma$ for spaces of global cross sections of bundles, $\chi$ for the space of vector fields, $\Omega$ for spaces of differential forms, etc. Everything will be $C^\infty$-smooth, except along the zero section of a vector bundle, where continuity may suffice. We assume that the reader is familiar with the geometry of a tangent manifold (everything needed below may be found in \cite{YI}). We also assume that the reader is familiar with Lie and Courant algebroids \cite{LWX} and with generalized structures \cite{Galt}.

Subsection 2.1 contains preliminaries on tangent manifolds $\mathcal{T}M$ (by $\mathcal{T}M$ we mean the total space of the tangent bundle $\pi:TM\rightarrow M$ of the manifold $M$) including Bott connections, tangent metrics and the definition of various identification isomorphisms between vector bundles over $\mathcal{T}M$. In Subsection 2.2 the basic results concerning generalized metrics and structures are transposed from the big tangent bundle $\mathbf{T}M=TM\oplus T^*M$ to the pullback bundle $\pi^{-1}(\mathbf{T}M)$. In Subsection 3.1 we define the generalized Sasaki metrics by transferring the generalized metrics of $\pi^{-1}(\mathbf{T}M)$ to $T\mathcal{T}M$. We give the expression of such a metric and define a corresponding, canonical, metric connection with torsion whose basic ingredient is a Cartan connection similar to that of Finsler geometry \cite{BCS}. We calculate the torsion, curvature and Ricci curvature of this connection and give some applications. In Subsection 3.2, motivated by the role of generalized geometry in string theory \cite{Zab}, we discuss the transfer of generalized complex and generalized K\"ahler structures from $\pi^{-1}(\mathbf{T}M)$ to the tangent manifold $\mathcal{T}M$.
\section{Preliminaries}
In this section we recall the terminology, notation and important definitions from the geometry of tangent bundles. Then, we recall basic results from generalized geometry, while transferring them to the pullback of the big tangent bundle of a given manifold to the tangent manifold.
\subsection{Geometry of tangent bundles}
Let $M$ be an $m$-dimensional manifold $M^m$. Then, on $\mathcal{T}M$, we have local coordinates
$(x^i,y^i)$ $(i=1,...,m)$, where $(x^i)$ are local coordinates on $M$ and $(y^i)$ are vector coordinates (velocities) with respect to the basis $\partial/\partial x^i$. The corresponding coordinate transformations are
\begin{equation}\label{transfcoordtg} \tilde{x}^i=\tilde{x}^i(x^j),
\,\tilde{y}^i=\frac{\partial\tilde{x}^i}{\partial x^j}y^j
\end{equation} (we use the Einstein summation convention). The fibers of $TM$ define the {\it vertical foliation} $x^i=const.$ and its tangent, {\it vertical bundle} $\mathcal{V}$.

We will denote by an upper index $v,c$ vertical lifts and complete lifts, respectively, and we recall the formulas \cite{YI}
$$
X^v=\xi^i\frac{\partial}{\partial y^i},\, X^c=\xi^i\frac{\partial}{\partial x^i}+y^j \frac{\partial\xi^i}{\partial x^j}\frac{\partial}{\partial y^i},$$ $$\alpha^v=\pi^*\alpha=\alpha_idx^i,\, \alpha^c=\alpha_idy^i+y^j\frac{\partial\alpha_i}{\partial x^j}dx^i,$$
where $X=\xi^i(\partial/\partial x^i),\, \alpha=\alpha_idx^i$.

The formula $S\mathcal{X}=(\pi_*\mathcal{X})^v$ ($\mathcal{X}\in T(\mathcal{T}M)$) defines a tensor field $S\in End(T\mathcal{T}M)$, called the {\it tangent structure} of $\mathcal{T}M$, with the properties $S^2=0,\,im\,S=ker\,S=\mathcal{V},\,\mathcal{N}_S=0$, where $\mathcal{N}_S$ is the Nijenhuis tensor
\begin{equation}\label{Nij} \mathcal{N}_S(\mathcal{X},\mathcal{Y})= [S\mathcal{X},S\mathcal{Y}]-S[S\mathcal{X},\mathcal{Y}] -S[\mathcal{X},S\mathcal{Y}]+S^2[\mathcal{X},\mathcal{Y}].
\end{equation}

Usually, it is convenient to choose a {\it horizontal bundle}, also called a
{\it nonlinear connection}, i.e., a complementary distribution $\mathcal{H}$ of $\mathcal{V}$ ($T\mathcal{T}M=\mathcal{H}\oplus\mathcal{V}$). A vector $X\in T_xM$ has a {\it horizontal lift} $X^h\in T_{(x,y)}\mathcal{T}M$ characterized by $X^h\in\mathcal{H}_{(x,y)},\pi_*X^h=X$. The horizontal lifts of $\partial/\partial x^i$ yield a local tangent basis of $\mathcal{T}M$ \begin{equation}\label{Hbases}X_i=\left(\frac{\partial}{\partial x^i}\right)^h= \frac{\partial}{\partial x^i}-t_i^j\frac{\partial}{\partial y^j},\;\frac{\partial}{\partial y^i}\end{equation} with the dual cotangent basis \begin{equation}\label{Hcobases}dx^i,\;\theta^i=dy^i+t^i_jdx^j,
\end{equation}
where $t_i^j$ are local functions on $\mathcal{T}M$, known as the coefficients of the nonlinear connection $\mathcal{H}$. By the first equality (\ref{Hbases}) a change of coordinates $\tilde{x}^i=\tilde{x}^i(x^j)$ implies $\tilde{X}_i=(\partial x^j/\partial\tilde{x}_i)X_j$ and, then, (\ref{transfcoordtg}) implies that the coefficients of the connection change by the rule
\begin{equation}\label{transft} \tilde{t}_i^j=\frac{\partial\tilde{x}^j}{\partial x^k}
\frac{\partial x^h}{\partial\tilde{x}^i}t_h^k-\frac{\partial x^h}{\partial\tilde{x}^i}
\frac{\partial^2\tilde{x}^j}{\partial x^h\partial x^l}y^l.\end{equation}

The notion of a horizontal lift has a natural extension to arbitrary tensors \cite{YI}. $S|_{\mathcal{H}}:\mathcal{H}\rightarrow\mathcal{V}$ is an isomorphism and we will denote by $S'$ the tensor equal to $(S|_{\mathcal{H}})^{-1}$ on $\mathcal{V}$ and to zero on $\mathcal{H}$. The tensor fields $P_{\mathcal{H}}=S+S'$ defines an almost paracomplex structure of $\mathcal{T}M$.
\begin{defin}\label{defBottc} {\rm
With a fixed choice of the horizontal bundle $\mathcal{H}$, a {\it Bott connection} on $\mathcal{T}M$ is a linear connection $\nabla$ that preserves the subbundles $\mathcal{H},\mathcal{V}$ and satisfies the conditions
\begin{equation}\label{BottV} \nabla_{\mathcal{X}}\mathcal{Y} =pr_{\mathcal{V}}[\mathcal{X},\mathcal{Y}],\,
\nabla_{\mathcal{Y}}\mathcal{X} =pr_{\mathcal{H}}[\mathcal{Y},\mathcal{X}].\end{equation}}\end{defin}

If $D$ is an arbitrary, linear connection, the addition of the derivatives
\begin{equation}\label{BottDV}
\nabla_{\mathcal{X}}\mathcal{X}'
=pr_{\mathcal{H}}D_{\mathcal{X}}\mathcal{X}',\, \nabla_{\mathcal{Y}}\mathcal{Y}'
=pr_{\mathcal{V}}D_{\mathcal{Y}}\mathcal{Y}'\; (\mathcal{X},\mathcal{X}'\in\mathcal{H},\,
\mathcal{Y},\mathcal{Y}'\in\mathcal{V})
\end{equation}
defines a connection $\nabla^D$, which we call the {\it Vr\u anceanu-Bott connection}.\footnote{In \cite{Bej0}, the author traced back the history of this connection to a 1931 paper by G. Vr\u anceanu \cite{Vr}.}
If $D$ has no torsion, $\nabla^D$ has the torsion
\begin{equation}\label{torsBott}
T_{\nabla^D}(\mathcal{Z}_1,\mathcal{Z}_2) =-pr_{\mathcal{V}}[pr_{\mathcal{H}}\mathcal{Z}_1, pr_{\mathcal{H}}\mathcal{Z}_2]= -R_{\mathcal{H}}(\mathcal{Z}_1,\mathcal{Z}_2),\end{equation} where $\mathcal{Z}_1,\mathcal{Z}_2\in\chi(\mathcal{T}M)$ and
$R_{\mathcal{H}}$ is the {\it Ehressmann curvature} of the non-linear connection $\mathcal{H}$.

Hereafter, the use of the term ``horizontal"  implies the fact that a horizontal bundle has been chosen and the terms ``vertical tensor", `` horizontal tensor" refer to tensors on the bundles $\mathcal{V},\mathcal{H}$, respectively. If necessary, we will look at vertical, respectively, horizontal tensors as tensors on $\mathcal{T}M$ by extending them by zero when evaluated on at least one horizontal, respectively, vertical argument. On the other hand, we can {\it reflect} (transfer) vertical tensors to horizontal tensors and vice-versa by $S$, $S'$, respectively. The reflection of $\tau$ will be denoted by $\tau^S$, if $\tau$ is horizontal, respectively, $\tau^{S'}$, if $\tau$ is vertical.
\begin{defin}\label{deftgm} {\rm
A {\it tangent metric} is a (pseudo-)Riemannian metric\footnote{``Pseudo" ia added if the metric is not positive definite.} $\gamma$ on $\mathcal{T}M$ with a non degenerate, restriction $\gamma_{\mathcal{V}}$, such that
$$ \gamma(S\mathcal{X},S\mathcal{X}')=\gamma(\mathcal{X},\mathcal{X}'),
\;\;\forall \mathcal{X},\mathcal{X}'\perp_\gamma\mathcal{V}.$$}\end{defin}

If $\gamma$ is a tangent metric, $\mathcal{H}=\mathcal{V}^{\perp_\gamma}$ is a horizontal bundle and $\gamma_{\mathcal{V}},g=\gamma_{\mathcal{H}}$ are a vertical, respectively horizontal, metric such that
$$\gamma_{\mathcal{V}}(S\mathcal{X},S\mathcal{X}')=g(\mathcal{X},\mathcal{X}').$$
The mapping $\gamma\mapsto(\mathcal{H},g)$ is a bijection between tangent metrics and pairs consisting of a horizontal bundle and a horizontal metric. The metric $g$ has the local expression
$$g=g_{ij}(x,y)dx^i\otimes dx^j,
\hspace{2mm}g_{ij}=g_{ji},\hspace{2mm}rank(g_{ij})=m.$$ For a given horizontal bundle $\mathcal{H}$, $\gamma$ is a corresponding tangent metric iff $\mathcal{H}\perp_\gamma\mathcal{V}$ and
\begin{equation}\label{tgPJ}
\gamma(P_{\mathcal{H}}\mathcal{Z},\mathcal{Z}')= \gamma(\mathcal{Z},P_{\mathcal{H}}\mathcal{Z}'),\;
\forall\mathcal{Z},\mathcal{Z}'\in T\mathcal{T}M.\end{equation}

In particular, the horizontal metric may be a {\it Lagrange metric} $g_{\mathcal{L}}$ with components $g_{ij}=\partial^2\mathcal{L}/\partial y^i\partial y^j$, where $\mathcal{L}$ is a function on $\mathcal{T}M$ such that $rank(g_{ij})=m$ (a {\it regular Lagrangian})\footnote{Sometimes, $\mathcal{L}$ itself is called a Lagrange metric} \cite{BM}. If $\mathcal{L}=\mathcal{F}^2$ where $\mathcal{F}$ is positive, positive homogeneous of degree $1$, and $g_{\mathcal{F}^2}$ is positive definite, then, $\mathcal{F}$ (and $g_{\mathcal{F}^2}$) is a {\it Finsler metric} \cite{BCS}.
We recall that a regular Lagrangian $\mathcal{L}$ defines a canonical horizontal bundle $\mathcal{H}_{\mathcal{L}}$ \cite{{BCS},{BM}}.

Let $\gamma$ be a tangent metric associated to a pair $(\mathcal{H},g)$ and $D$ the Levi-Civita connection of $\gamma$. The corresponding Vr\u anceanu-Bott connection $\nabla^D$ is called the {\it canonical connection} of $\gamma$.

The {\it Cartan tensor} of a horizontal metric (and of the corresponding tangent metric) is the horizontal tensor defined by
\begin{equation}\label{tensC2} C(\mathcal{X},\mathcal{X}',\mathcal{X}'')=\nabla^D_{S\mathcal{X}}g (\mathcal{X}',\mathcal{X}''),\;\mathcal{X},\mathcal{X}',\mathcal{X}''\in \mathcal{H}.\end{equation}
The local expression of $C$ is
$$ C=\frac{\partial g_{ij}}{\partial y^k}dx^k\otimes dx^i\otimes dx^j,$$
where $g_{ij}=g(X_i,X_j)$ with $X_i$ given by (\ref{Hbases}).
It follows that $C=0$ iff $g$ is the lift of a metric of $M$ and $C$ is totally symmetric iff each point has a neighborhood where $g$ is a Lagrange metric.

On the other hand, Finsler geometry suggests to introduce the {\it Cartan connection}
\begin{equation}\label{conexCartan} \nabla^{\mathcal{C}}_{\mathcal{Z}}\mathcal{Z}'=\nabla^D_{\mathcal{Z}}\mathcal{Z}' +\Psi(\mathcal{Z},\mathcal{Z}'),\end{equation}
where $\mathcal{Z},\mathcal{Z}',\mathcal{Z}''\in T\mathcal{T}M$ and $\Psi$ is defined by
$$ \gamma(\Psi(\mathcal{Z},\mathcal{Z}'), \mathcal{Z}'')=\frac{1}{2}C(S'\mathcal{Z},\mathcal{Z'},\mathcal{Z}'').$$
Therefore, $\Psi(\mathcal{Z},\mathcal{Z}')\in\mathcal{H}$ and $\Psi(\mathcal{Z},\mathcal{Z}')=0$ unless $\mathcal{Z}=\mathcal{Y}\in\mathcal{V},\mathcal{Z}'=\mathcal{X}\in\mathcal{H}$.
Using (\ref{tensC2}), it follows that $(\nabla^{\mathcal{C}}_{\mathcal{Z}}\gamma)(\mathcal{X},\mathcal{X}')=0$ for all $\mathcal{Z}\in T\mathcal{T}M,\mathcal{X},\mathcal{X}'\in\mathcal{H}$. The Cartan connection preserves the horizontal and vertical subbundles, but, it is not a Bott connection. The torsion of the Cartan connection is equal to that of $\nabla^D$ for two horizontal and two vertical arguments and $T^{\nabla^{\mathcal{C}}}(\mathcal{X},\mathcal{Y})= -\Psi(\mathcal{Y},\mathcal{X})$ if $\mathcal{X}\in\mathcal{H},\mathcal{Y}\in\mathcal{V}$.

We will be interested in the vector bundles $\pi^{-1}(TM)$ and $\pi^{-1}(\mathbf{T}M)$.
The mapping $$((x,y),X\in T_{x}M)\mapsto X^h\in\mathcal{H}_{(x,y)}$$ is a vector bundle isomorphism
$\pi^{-1}(TM)\approx\mathcal{H}$ with the inverse defined by $\pi_*$ and the mapping
\begin{equation}\label{frakh}
((x,y),(X,\alpha))\in \mathbf{T}_{x}M)\mapsto (X^h,\pi^*\alpha)
\in\mathcal{H}_{(x,y)}\oplus(ann\,\mathcal{V})
\end{equation}
is a vector bundle isomorphism
$\pi^{-1}(\mathbf{T}M)\approx\mathcal{H}\oplus\mathcal{H}^*$.
We will denote both mappings by the same symbol $\mathfrak{h}$.

The mapping $$((x,y),X\in T_{x}M)\mapsto X^v=SX^h$$ is an isomorphism
$\pi^{-1}(TM)\approx\mathcal{V}$ with the inverse defined by $\pi_*\circ S'$, and the mapping
$$((x,y),(X,\alpha)\in \mathbf{T}_{x}M)\mapsto(X^v=SX^h,[\alpha^c]_{ann\,\mathcal{V}}
=[(\pi^*\alpha )\circ S']_{ann\,\mathcal{V}})$$ is an isomorphism $\pi^{-1}(\mathbf{T}M)
\approx\mathcal{V}\oplus(T^*M/ann\,\mathcal{V})
\approx\mathcal{V}\oplus\mathcal{V}^*$.	 The inverse of this isomorphism has the following local expression
$$
(\eta^i(x,y)\frac{\partial}{\partial y^i}, \nu_i(x,y)[\theta^i]_{ann\,\mathcal{V}})
\mapsto((x,y),(\eta^i\frac{\partial}{\partial x^i},\nu_i(x,y)dx^i)),$$
where $\theta^i$ is defined by (\ref{Hcobases}).
We will denote the two mappings by the same symbol $\mathfrak{v}$ and notice the formulas
\begin{equation}\label{vh-1}
\mathfrak{v}\circ\mathfrak{h}^{-1}(\mathcal{X},\nu) =(S\mathcal{X},\nu\circ S'),\;
\mathfrak{h}\circ\mathfrak{v}^{-1}(\mathcal{Y}, \kappa)	 =(S'\mathcal{Y},\kappa\circ S),\end{equation}
where $\mathcal{X}\in\mathcal{H},\nu\in ann\,\mathcal{V},\mathcal{Y}\in\mathcal{V},\kappa\in ann\,\mathcal{H}$.

If $\mathcal{T}M$ is endowed with a tangent metric $\gamma$ with the corresponding horizontal bundle $\mathcal{H}\perp_\gamma\mathcal{V}$ and horizontal metric $g$, there are other interesting isomorphisms too. We indicate the whole picture in the following commutative {\it canonical identification diagram}
$$ \begin{array}{ccccc}
&&\mathcal{H}\oplus\mathcal{H}^*& \stackrel{\mathfrak{i}}{\longrightarrow}& T\mathcal{T}M\\ \pi^{-1}\mathbf{T}M &
\begin{array}{c}
\stackrel{\mathfrak{h}}{\nearrow}\vspace*{2mm}\\ \stackrel{\searrow}{\mathfrak{v}} \end{array}& \mathfrak{v}\mathfrak{h}^{-1}\downarrow\;
\uparrow\mathfrak{h}\mathfrak{v}^{-1}&& \downarrow\uparrow(S+S')  \\ && \mathcal{V}\oplus\mathcal{V}^*& \stackrel{\mathfrak{j}}{\longrightarrow}&T\mathcal{T}M \end{array}
$$
where, $\forall\mathcal{Z}\in T\mathcal{T}M$,
\begin{equation}\label{ij}
\mathfrak{i}^{-1}\mathcal{Z}= (pr_{\mathcal{H}}\mathcal{Z},\flat_\gamma S'pr_{\mathcal{V}}\mathcal{Z}),\; \mathfrak{j}^{-1}\mathcal{Z}= (pr_{\mathcal{V}}\mathcal{Z},\flat_\gamma Spr_{\mathcal{H}}\mathcal{Z}).\end{equation}
\subsection{Generalized geometry}
A {\it metric algebroid} \cite{VJMP} is a metric vector bundle $(E,g)$ with an anchor $\rho:E\rightarrow TM$ and a skew-symmetric bracket $[\,,\,]$ such that
\begin{equation}\label{metricbr}
(\rho e)(g(e_1,e_2))=g([e,e_1]+\partial(g(e,e_1)),e_2) +g(e_1,[e,e_2]+\partial(g(e,e_2))),\end{equation}
where $e,e_1,e_2\in\Gamma E$ and, for any function $f$, $\partial f$ is defined by $(\rho e)f=2g(\partial f,e)$. One can show that (\ref{metricbr}) has the consequence \cite{VJMP}
\begin{equation}\label{metriclin} [e_1,fe_2]=f[e_1,e_2]+(\rho e_1)(f)e_2-g(e_1,e_2)\partial f.\end{equation}
A {\it Courant algebroid} is a metric algebroid that also satisfies the axioms \cite{LWX}:
$$\rho[e_1,e_2]=[\rho e_1,\rho e_2],\, \rho\circ\partial=0,\; \sum_{Cycl}[[e_1,e_2],e_3]=(1/3)\partial\sum_{Cycl}g([e_1,e_2],e_3).$$

The basic example of a Courant algebroid is the {\it big tangent bundle} $\mathbf{T}M=TM\oplus T^*M$ with the anchor defined by $(X,\alpha)\mapsto X$. The structure group of $\mathbf{T}M$ has a natural reduction to $O(m,m)$ given by the neutral metric (i.e., of signature zero)
\begin{equation}\label{g}
\mathfrak{g}(\mathfrak{X},\mathfrak{Y})=\frac{1}{2}(\alpha(Y)+\mu(X)),
\end{equation}
where $\mathfrak{X}=(X,\alpha),\mathfrak{Y}=(Y,\mu)$ with $X,Y$ either vectors or vector fields and $\alpha,\mu$ either covectors or $1$-forms. The Courant bracket
of $\mathbf{T}M$ is $$
[\mathfrak{X},\mathfrak{Y}]=([X,Y],i(\mathcal{X})d\mu-i(\mathcal{Y})d\alpha -\frac{1}{2}d(\alpha(Y)-\mu(X)).$$

Furthermore, assume that $M$ is a {\it foliated manifold} with a foliation $\mathcal{F}$. For any choice of a horizontal bundle $\mathcal{H}$, $TM=\mathcal{H}\oplus T\mathcal{F}$, the bundle $\mathbf{T}M$ has the decomposition
$\mathbf{T}M=\mathbf{H}\oplus\mathbf{T}\mathcal{F},$ where $\mathbf{H}=\mathcal{H}\oplus\mathcal{H}^*$, $\mathbf{T}\mathcal{F}=T\mathcal{F}
\oplus T^*\mathcal{F}$ and $\mathcal{H}^*=ann(T\mathcal{F}), T^*\mathcal{F}=ann(\mathcal{H})$. Accordingly, the Courant bracket on $M$ induces the following brackets on $\mathbf{H}$ and $\mathbf{T}\mathcal{F}$:
\begin{equation}\label{Hbracket} \begin{array}{l}
[(X,\alpha),(X',\alpha')]_{\mathcal{H}}= pr_{\mathbf{H}}[(X,\alpha),(X',\alpha')]
\vspace*{2mm}\\ =(pr_{\mathcal{H}}[X,X'],i(X)d'\alpha'-i(X')d'\alpha -\frac{1}{2}d'(\alpha(X')-\alpha'(X))),\end{array}\end{equation}
where $X,X'\in\mathcal{H}, \alpha,\alpha'\in ann\,T\mathcal{F}$ and $d'=d|_{\mathcal{H}}$;
$$ \begin{array}{l}
[(Y,\nu),(Y',\nu')]_{\mathcal{F}}= pr_{\mathbf{T}\mathcal{F}}[(Y,\nu),(Y',\nu')]
\vspace*{2mm}\\ =([Y,Y'],i(Y)d''\nu'-i(Y')d''\nu -\frac{1}{2}d''(\nu(Y')-\nu'(Y))),\end{array}$$
where $Y,Y'\in T\mathcal{F}, \nu,\nu'\in ann\,\mathcal{H}$ and $d''$ is exterior differentiation along the leaves of $\mathcal{F}$.

Moreover, the bundles $\mathbf{H},\mathbf{T}\mathcal{F}$ have neutral metrics induced by $\mathfrak{g}$ and anchors
$\rho_{\mathcal{H}}(X,\alpha)=X,\rho_{\mathcal{F}}(Y,\nu)=Y$. The operations $(\mathfrak{g}|_{\mathbf{T}\mathcal{F}}, \rho_{\mathcal{F}},[\,,\,]_{\mathcal{F}})$ are the restrictions of the Courant algebroid operations of the big tangent bundle of the sum of the leaves of $\mathcal{F}$ to sections that are differentiable with respect to the original $C^\infty$-structure of $M$. Therefore, these operations make $\mathbf{T}\mathcal{F}$ into a Courant algebroid. On the other hand, the metric property (\ref{metricbr}) of the classical Courant bracket implies the same property for $(\mathfrak{g}|_{\mathbf{H}},\rho_{\mathcal{H}},[\,,\,]_{\mathcal{H}})$, therefore, $\mathbf{H}$ is a metric algebroid. But, unless $\mathcal{H}$ is integrable, $\mathbf{H}$ is not a Courant algebroid since $\rho_{\mathcal{H}}[(X,\alpha),(X'\alpha')]_{\mathcal{H}}\neq[X,X']$.

On a manifold $M$, generalized geometric structures are defined by reducing the structure group $O(m,m)$ of the big tangent bundle $\mathbf{T}M$ to a subgroup and integrability is defined by Courant bracket conditions (instead of Lie bracket conditions). The most studied were the {\it generalized complex structures}, alone and in conjunction with {\it generalized Riemannian metrics} and {\it generalized K\"ahler metrics} \cite{Galt}. These structures also have applications in string theory \cite{Zab}. Below, we present similar structures on the vector bundle $\pi^{-1}(\mathbf{T}M)\rightarrow\mathcal{T}M$. In other words, we refer to velocity dependent generalized (v.d.g.) geometry. \begin{defin}\label{defgenmetric} {\rm A
{\it v.d.g. (pseudo-)Riemannian metric} is a (pseudo-)Euclidian metric $\mathcal{G}$ on $\pi^{-1}(\mathbf{T}M)$, which has a non degenerate restriction to $\pi^{-1}(T^*M)$ and satisfies the $\mathfrak{g}$-compatibility condition
\begin{equation}\label{compparaH} \sharp_{\mathcal{G}}\circ\flat_\mathfrak{g}= \sharp_\mathfrak{g}\circ\flat_{\mathcal{G}}.
\end{equation}}\end{defin}

We shall briefly recall the following basic result \cite{Galt} (see also \cite{VT}), which also allows us to establish more of the required terminology.
\begin{prop}\label{sigmapsi} The v.d.g. metrics are in a bijective correspondence with pairs $(\sigma,\psi)$ where $\sigma$ is a non degenerate metric on $\pi^{-1}(TM)$ and $\psi$ is a $2$-form, $\psi\in\Gamma\wedge^2\pi^{-1}(T^*M)$.\end{prop}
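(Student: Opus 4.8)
The claim is that v.d.g. metrics $\mathcal{G}$ on $\pi^{-1}(\mathbf{T}M)$—(pseudo-)Euclidean metrics satisfying the $\mathfrak{g}$-compatibility condition \eqref{compparaH} and nondegenerate on $\pi^{-1}(T^*M)$—correspond bijectively to pairs $(\sigma,\psi)$ with $\sigma$ a nondegenerate metric on $\pi^{-1}(TM)$ and $\psi\in\Gamma\wedge^2\pi^{-1}(T^*M)$. This is the pullback-bundle analogue of the standard fact that a generalized metric is encoded by a Riemannian metric plus a $B$-field, so the plan is to adapt the classical argument \cite{Galt} verbatim to sections of $\pi^{-1}(\mathbf{T}M)$, checking that nothing uses the manifold structure of the base beyond fiberwise linear algebra.

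**The construction map $(\sigma,\psi)\mapsto\mathcal{G}$.** First I would write $\mathbf{T}M=TM\oplus T^*M$ fiberwise and use $\sigma$ to identify $\pi^{-1}(TM)$ with $\pi^{-1}(T^*M)$ via $\flat_\sigma$. Given $(\sigma,\psi)$, define $\mathcal{G}$ on pairs $\mathfrak{X}=(X,\alpha)$, $\mathfrak{Y}=(Y,\mu)$ by the block expression $\mathcal{G}(\mathfrak{X},\mathfrak{Y})=\sigma(X,Y)+\sigma^{-1}(\alpha-\flat_\sigma\!\circ\!\sharp_\psi\,X,\;\mu-\flat_\sigma\!\circ\!\sharp_\psi\,Y)$, or equivalently present the matrix of $\mathcal{G}$ relative to $\mathfrak{g}$ in the familiar block form built from $\sigma$ and $\psi$. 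I would then verify three things: that $\mathcal{G}$ is symmetric and (pseudo-)Euclidean; that its restriction to $\pi^{-1}(T^*M)$ is $\sigma^{-1}$, hence nondegenerate; and that \eqref{compparaH} holds, i.e.\ the endomorphism $F=\sharp_\mathfrak{g}\circ\flat_{\mathcal{G}}$ is $\mathfrak{g}$-self-adjoint—this is a direct computation using the defining formula \eqref{g} for the neutral metric $\mathfrak{g}$.

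**The inverse map $\mathcal{G}\mapsto(\sigma,\psi)$.** Conversely, starting from $\mathcal{G}$ I would form $F=\sharp_\mathfrak{g}\circ\flat_{\mathcal{G}}\in\mathrm{End}\,\pi^{-1}(\mathbf{T}M)$; condition \eqref{compparaH} says exactly that $F$ is $\mathfrak{g}$-symmetric, and since $\mathcal{G}$ and $\mathfrak{g}$ are nondegenerate, $F$ is invertible. The key structural point is that $F$ interchanges the two summands in a controlled way: because $\mathfrak{g}$ pairs $TM$ with $T^*M$, self-adjointness of $F$ forces a specific block structure, from which $\sigma:=\mathcal{G}|_{\pi^{-1}(TM)}$ (after projecting off the $\psi$-twist) emerges as nondegenerate, and $\psi$ is read off from the off-diagonal block as the skew-symmetric part. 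I would define $\psi(X,Y):=\mathcal{G}((X,0),(0,\cdot))$-type pairing and check skew-symmetry follows from symmetry of $\mathcal{G}$ together with the pairing structure of $\mathfrak{g}$. Finally I would confirm that the two maps are mutually inverse by substitution.

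**The main obstacle.** The genuinely nontrivial step is extracting the nondegenerate $\sigma$ on $\pi^{-1}(TM)$: the naive restriction $\mathcal{G}|_{\pi^{-1}(TM)}$ need not be nondegenerate, and one must use \eqref{compparaH} to show that, after correcting by the $\psi$-twist, the diagonal part is nondegenerate—this is where the hypothesis that $\mathcal{G}$ is nondegenerate \emph{on $\pi^{-1}(T^*M)$} is consumed. Concretely, the self-adjoint endomorphism $F$ has a block decomposition relative to the splitting, and I expect to show that invertibility of $\mathcal{G}|_{\pi^{-1}(T^*M)}$ guarantees the invertibility of the Schur complement that plays the role of $\sigma$. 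Everything here is pointwise linear algebra on the fibers of $\pi^{-1}(\mathbf{T}M)$, smoothly parametrized by $(x,y)\in\mathcal{T}M$; since $\sigma,\psi,\mathcal{G}$ all live over the same base $\mathcal{T}M$ and the isomorphisms are fiberwise-linear and smooth, smoothness of the correspondence in both directions is automatic and requires no separate argument.
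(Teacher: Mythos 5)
Your overall architecture (pass to the endomorphism $F=\sharp_\mathfrak{g}\circ\flat_{\mathcal{G}}$, read $\sigma$ and $\psi$ off its blocks, invert via the graph-type formula) is the same as the paper's, but there is a genuine gap at the very first step: you misread what the compatibility condition \eqref{compparaH} says. Self-adjointness of $F$ with respect to $\mathfrak{g}$ is \emph{not} the content of \eqref{compparaH}; it is automatic from the symmetry of $\mathcal{G}$, since $\mathfrak{g}(F\mathfrak{X},\mathfrak{Y})=\mathcal{G}(\mathfrak{X},\mathfrak{Y})$ for all sections. What \eqref{compparaH} actually asserts is that $\sharp_{\mathcal{G}}\circ\flat_{\mathfrak{g}}$, which is the \emph{inverse} of $F=\sharp_{\mathfrak{g}}\circ\flat_{\mathcal{G}}$, coincides with $F$; that is, $F^2=Id$, so $F$ is a $\mathfrak{g}$-compatible para-complex (product) structure. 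This is exactly the paper's $H$ with its two conditions \eqref{propluiH}, and the involution property is the engine of the whole proposition.

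Without $F^2=Id$ your inverse map cannot work. Symmetry of $\mathcal{G}$ alone forces only the block shape $\bigl(\begin{smallmatrix} Q & \sharp_w\\ \flat_b & {}^t\hspace{-1pt}Q\end{smallmatrix}\bigr)$ with $w,b$ symmetric and $Q$ an \emph{arbitrary} endomorphism of $\pi^{-1}(TM)$: fibrewise that is $m(2m+1)$ parameters, against the $m^2$ parameters of a pair $(\sigma,\psi)$, so no bijection is possible at that level of generality. Concretely, two of your steps fail as stated: (i) skew-symmetry of $\psi$ does \emph{not} follow from ``symmetry of $\mathcal{G}$ together with the pairing structure of $\mathfrak{g}$'' --- it is precisely the relation $Q\circ\sharp_\sigma=-\sharp_\sigma\circ{}^t\hspace{-1pt}Q$ of \eqref{relinH}, i.e., $\sigma$-skew-adjointness of $Q$, which comes only from $F^2=Id$; and (ii) your two maps are mutually inverse only because the $TM\times TM$ block is \emph{determined} by $(\sigma,\psi)$ through the first relation of \eqref{relinH}, $\flat_\beta=\flat_\sigma\circ(Id-Q^2)$; for a merely symmetric $\mathcal{G}$ that block is independent data, so the composite $\mathcal{G}\mapsto(\sigma,\psi)\mapsto\mathcal{G}$ is not the identity. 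Once \eqref{compparaH} is read correctly, your plan does go through and essentially reproduces the paper's argument; indeed your extraction of $\sigma$ directly from the restriction $\mathcal{G}|_{\pi^{-1}(T^*M)}$ (equivalently, your Schur complement) is a reasonable substitute for the paper's appeal to the eigenbundle condition $\pi^{-1}(T^*M)\cap V_\pm=0$ quoted from \cite{VT}. (Minor point: in your construction formula, $\flat_\sigma\circ\sharp_\psi\,X$ does not typecheck; it should be $\flat_\psi X$.)
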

\begin{proof} Define $$H=\sharp_{\mathcal{G}}\circ\flat_{\mathfrak{g}}=
\sharp_\mathfrak{g}\circ\flat_{\mathcal{G}}\in End(\pi^{-1}(\mathbf{T}M)),$$
equivalently,
\begin{equation}\label{compatGg}
\mathcal{G}((\mathcal{X},\alpha), (\mathcal{Y},\beta))=\mathfrak{g}(H(\mathcal{X},\alpha), (\mathcal{Y},\beta)),\end{equation}
where $\mathcal{X},\mathcal{Y}\in\pi^{-1}(TM), \alpha,\beta\in\pi^{-1}(T^*M)$.
The symmetry of $\mathcal{G}$ and condition (\ref{compparaH}) may be written as \begin{equation}\label{propluiH} \mathfrak{g}(H(\mathcal{X},\alpha), (\mathcal{Y},\beta))=\mathfrak{g}((\mathcal{X},\alpha), H(\mathcal{Y},\beta)),\;
H^2=Id.\end{equation}
Therefore, $H$ is a product structure on the bundle $\pi^{-1}(\mathbf{T}M)$ and	 $\pi^{-1}(\mathbf{T}M)=V_+\oplus V_-$ where the terms are the $\pm1$-eigenbundles of $H$. Furthermore, it was proven in \cite{VT} that the non-degeneracy of $\mathcal{G}|_{\pi^{-1}(T^*M)}$ is equivalent with $\pi^{-1}(T^*M)\cap V_\pm=0$.
Thus, we see that there exists a bijective correspondence between v.d.g. metrics $\mathcal{G}$ and the (necessarily para-complex) structures $H$ that satisfy (\ref{propluiH}) and the condition $\pi^{-1}(T^*M)\cap V_\pm=0$.

Such a structure $H$ may be represented under the following matrix form
\begin{equation}\label{H}
H\left(\begin{array}{c}\mathcal{X}\vspace{2mm}\\ \alpha \end{array}
\right)=\left(\begin{array}{cc} Q&\sharp_\sigma\vspace{2mm}\\
\flat_\beta&^t\hspace{-1pt}Q\end{array}\right) \left(
\begin{array}{c}\mathcal{X}\vspace{2mm}\\ \alpha \end{array}\right),
\end{equation} where the index $t$ denotes transposition and: 1) $\beta,\sigma\in\Gamma\odot^2(\pi^{-1}(T^*M))$, $\sigma$ is non degenerate, 2) $Q\in End(\pi^{-1}(TM))$ and the three tensors satisfy the equalities
\begin{equation}\label{relinH}
Q^2+\sharp_\sigma\circ\flat_\beta=Id,\; Q\circ\sharp_\sigma=-\sharp_\sigma\circ\hspace{1pt}^t\hspace{-1pt}Q,
\;\hspace{1pt}^t\hspace{-1pt}Q\circ\flat_\beta=-\flat_\beta\circ Q.
\end{equation}
This shows that the v.d.g. metrics correspond bijectively to the pairs $(\sigma,Q)$ and, if we define the form $\psi$ by the condition
$\flat_\psi=-\flat_\sigma\circ Q$, we are done.\end{proof}
\begin{rem}\label{obsiota} {\rm
The condition $\pi^{-1}(T^*M)\cap V_\pm=0$ implies that the projection $\pi^{-1}(\mathbf{T}M)\rightarrow\pi^{-1}(TM)$ restricts to isomorphisms between $V_\pm\approx\pi^{-1}(TM)$ and the inverse isomorphisms $\iota_\pm:\pi^{-1}(TM)\rightarrow V_\pm$ are given by $\iota_\pm(\mathcal{X})=(\mathcal{X},\flat_{\psi\pm\sigma}\mathcal{X})$
\cite{Galt}.}\end{rem}

A connection $\mathcal{D}$ on $\pi^{-1}(\mathbf{T}M)$ is a {\it paracomplex connection} if it commutes with the paracomplex structure $H$.
There exists a bijective correspondence between paracomplex connections $\mathcal{D}$ and pairs of connections $(D^+,D^-)$ on $\pi^{-1}(TM)$.
Indeed, $\mathcal{D}$ is paracomplex iff the parallel translations preserve the subbundles $V_\pm$. This is equivalent with the requirement that $\mathcal{D}$ be of the form
\begin{equation}\label{Dpm}
\mathcal{D}_{\mathcal{Z}}(\iota_\pm\mathcal{Y})
=\iota_\pm(D^{\pm}_{\mathfrak{Z}}\mathcal{Y})
\hspace{2mm} (\mathcal{Z}\in
\chi(\mathcal{T}M),\mathcal{Y}\in\Gamma\pi^{-1}(TM)),\end{equation}
where $D^{\pm}$ is the required pair.
Formula (\ref{Dpm}) implies the same relation for curvature, i.e.,
\begin{equation}\label{Rpm}
R_{\mathcal{D}}(\mathcal{Z},\mathcal{Z}')(\iota_\pm\mathcal{Y})
=\iota_\pm(R_{D^\pm}(\mathcal{Z},\mathcal{Z}')\mathcal{Y}).\end{equation}

Furthermore, a connection $\mathcal{D}$ on $\pi^{-1}(\mathbf{T}M)$ is a {\it double metric connection} if parallel translations preserve the metrics $\mathfrak{g}$ and $\mathcal{G}$,
and there exists a bijective correspondence between double metric connections $\mathcal{D}$ on $\pi^{-1}(\mathbf{T}M)$ and pairs $(D^+,D^-)$ of $\sigma$-preserving connections on $\pi^{-1}(TM)$.
Indeed, the connection $\mathcal{D}$ preserves the pair of metrics $(\mathfrak{g},\mathcal{G})$ iff it preserves the subbundles $V_\pm$ and their metrics $(\iota_\pm^{-1})^*\sigma$. Hence, $\mathcal{D}$ is a paracomplex connection of the form
(\ref{Dpm}) and connections $D^{\pm}$ must preserve the metric $\sigma$.
\begin{defin}\label{vdgcomplex} {\rm
A {\it v.d.g. almost complex structure} is an endomorphisms $\mathcal{J}$ of $\pi^{-1}(\mathbf{T}M)$ such that
$$\mathcal{J}^2=-Id,\;\mathfrak{g}((\mathcal{X},\alpha), \mathcal{J}(\mathcal{Y},\beta)) +\mathfrak{g}(\mathcal{J}(\mathcal{X},\alpha), (\mathcal{Y},\beta))=0.$$}\end{defin}

$\mathcal{J}$ may be represented by
\begin{equation}\label{J}
\mathcal{J}\left(\begin{array}{c}\mathcal{X}\vspace{2mm}\\ \alpha \end{array}
\right)=\left(\begin{array}{cc} A&\sharp_w\vspace{2mm}\\
\flat_b&-\hspace{1pt}^t\hspace{-1pt}A\end{array}\right) \left(
\begin{array}{c}\mathcal{X}\vspace{2mm}\\ \alpha \end{array}\right),
\end{equation}
$$b\in\Gamma\wedge^2\pi^{-1}(TM^*),\, w\in\Gamma\wedge^2\pi^{-1}(TM), \,A\in End\,\pi^{-1}(TM),$$ where
$$A^2+\sharp_w\circ\flat_b=-Id,\; A\circ\sharp_w=\sharp_w\circ\hspace{1pt}^t\hspace{-1pt}A,
\;\hspace{1pt}^t\hspace{-1pt}A\circ\flat_b=\flat_b\circ A.$$

A pair $(\mathcal{G},\mathcal{J})$ consisting of a v.d.g. (pseudo-)Riemannian metric and a v.d.g. almost complex structure on $\pi^{-1}(\mathbf{T}M)$ is {\it compatible} or a {\it v.d.g. almost (pseudo-)Hermitian structure} if
$H\circ\mathcal{J}=\mathcal{J}\circ H$. Then, $\mathcal{J}$ preserves $V_\pm$ and it induces complex structures in $V_\pm$. Therefore, a v.d.g. almost (pseudo-)Hermitian structure is equivalent with a quadruple $(\sigma,\psi,J_\pm)$ where $J_\pm$ are almost complex structures of $\pi^{-1}(TM)$, compatible with $\sigma$. If one replaces the pair $(J_+,J_-)$ by $(J_+,-J_-)$, one gets the {\it complementary structure} $\mathcal{J}'=H\circ\mathcal{J}$ \cite{Galt}.

In the usual case of $\mathbf{T}M$, a generalized almost complex structure is integrable iff the {\it Courant-Nijenhuis torsion}, defined by replacing the Lie brackets by Courant brackets in (\ref{Nij}), vanishes and a compatible pair $(\mathcal{G},\mathcal{J})$ is a generalized (pseudo-)K\"ahler structure if both $\mathcal{J}$ and $\mathcal{J}'$ are integrable.

In the v.d.g. case, we may define the {\it vertical version} of v.d.g. structures by transferring from $\pi^{-1}(\mathbf{T}M)$ to $\mathcal{V}\oplus\mathcal{V}^*$ via the isomorphism $\mathfrak{v}$. Then, we may define {\it vertical integrability} of the structure $\mathcal{J}$ by the vanishing of the vertical Courant-Nijenhuis torsion, constructed with the vertical Courant bracket. This is not very interesting since it reduces to the usual case along the sum of the vertical leaves with the supplementary requirement that differentiability be with respect to the original differential structure of $M$.
\begin{example}\label{excomplexvert} {\rm Let $J$ be a generalized almost complex structure of the manifold $M$, which has the corresponding matrix representation (\ref{J}) with entries $F\in End\,TM, \pi\in\Gamma\wedge^2TM,\varpi\in\Omega^2(M)$. Define corresponding vertical tensors
$$ \begin{array}{c} AX^v=(FX)^v,\;
w(\lambda^v\circ S',\mu^v\circ S')=\pi(\lambda,\mu),\;\vspace*{2mm}\\
b(X^v,Y^v)=\varpi^v(S'X^v,S'Y^v),\end{array}$$
where the index $v$ denotes vertical lift and $S'$ is defined using an arbitrary horizontal bundle.
The result is the vertical version of a v.d.g. almost complex structure $\mathcal{J}$ since the required algebraic conditions for $A,w,b$ follow from the corresponding conditions for $F,\pi,\varpi$. This structure $\mathcal{J}$ is the {\it vertical lift} of $J$. Using the fact that the local components of $A,w,b$ with respect to the basis $\partial/\partial y^i$ depend only on $x$, it is easy to check that $\mathcal{J}$ is always vertically integrable even if $J$ is not integrable on $M$.}\end{example}
\begin{example}\label{exKgen1} {\rm Let $(G,J)$ be a generalized almost Hermitian structure on $M$ and let $h$ be the almost-paracomplex structure that corresponds to $G$. We may define the vertical lift of $h$ as we did for $J$ in Example \ref{excomplexvert} and get a vertical generalized metric $\mathcal{G}$. We also have the vertically integrable lift $\mathcal{J}$ of $J$. The algebraic relations satisfied by $G,J$ hold $\mathcal{G},\mathcal{J}$ too, and the complementary structure $\mathcal{J}'$ will be the vertical lift of $J^c$. By the conclusion of Example \ref{excomplexvert}, $\mathcal{J}'$ is vertically integrable too and $(\mathcal{G},\mathcal{J})$ is a vertical, generalized K\"ahler structure.}\end{example}
\section{Generalized metrics on tangent manifolds}
Velocity dependent generalized structures become of interest for the geometry of tangent manifolds if they are transferred to $T\mathcal{T}M$ by the intermediary of $\mathbf{H}=\mathcal{H}\oplus\mathcal{H}^*$.
\subsection{Generalized Sasaki metrics}
Let $\mathcal{G}$ be a v.d.g. (pseudo-)Riemannian metric on $M$, associated with the para-complex structure $H$ represented by (\ref{H}).
Then, $\mathfrak{h}$ sends the related tensors $\mathcal{G},\sigma,\psi,\beta,Q$ of Section 2.2 to horizontal tensors, which we will denote by the same symbols. The pair $(\mathcal{H},\sigma)$ produces a {\it subordinated tangent metric} $\gamma$ on $\mathcal{T}M$, which, in turn, yields a canonical identification $\mathfrak{i}$.
\begin{defin}\label{defgenSas} {\rm A (pseudo-)Riemannian metric $\mathcal{G}$ of the tangent manifold $\mathcal{T}M$ defined by the $\mathfrak{i}\circ\mathfrak{h}$-transfer of a v.d.g. metric $\mathcal{G}$ of $\pi^{-1}(\mathbf{T}M)$ will be called a {\it generalized Sasaki metric}.}\end{defin}
\begin{prop} The generalized Sasaki metric $\mathcal{G}$ is given by the following values on horizontal and vertical arguments:
\begin{equation}\label{Grondcugamma}
2\mathcal{G}(\mathcal{Z},\mathcal{Z}')=\left\{
\begin{array}{ll} \hspace*{3mm}\beta(\mathcal{Z},\mathcal{Z}')&\,{\rm if}\,\mathcal{Z},\mathcal{Z}'
\in\mathcal{H},\vspace*{2mm}\\ -\psi(\mathcal{Z},S'\mathcal{Z}')&\,{\rm if}\,
\mathcal{Z}\in\mathcal{H},\mathcal{Z}'\in\mathcal{V},\vspace*{2mm}\\ \hspace*{3mm}\sigma(S'\mathcal{Z},S'\mathcal{Z}')&\,{\rm if}\, \mathcal{Z},\mathcal{Z}'\in\mathcal{V}.
\end{array}\right.\end{equation} \end{prop}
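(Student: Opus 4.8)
The plan is to unwind the definition of the $\mathfrak{i}\circ\mathfrak{h}$-transfer and reduce the statement to a single algebraic identity evaluated in the three bidegrees (horizontal–horizontal, horizontal–vertical, vertical–vertical). By definition of transfer, the metric on $T\mathcal{T}M$ satisfies $\mathcal{G}(\mathcal{Z},\mathcal{Z}')=\mathcal{G}((\mathfrak{i}\circ\mathfrak{h})^{-1}\mathcal{Z},(\mathfrak{i}\circ\mathfrak{h})^{-1}\mathcal{Z}')$. Since the tensors $\mathcal{G},\sigma,\psi,\beta,Q$ have already been carried to $\mathcal{H}\oplus\mathcal{H}^*$ by $\mathfrak{h}$ and renamed, the only identification left to apply is $\mathfrak{i}$, so the whole computation amounts to evaluating $\mathcal{G}(\mathfrak{i}^{-1}\mathcal{Z},\mathfrak{i}^{-1}\mathcal{Z}')$ with $\mathfrak{i}^{-1}$ given by (\ref{ij}).

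First I would assemble two ingredients. On the one hand, (\ref{ij}) gives $\mathfrak{i}^{-1}\mathcal{Z}=(pr_{\mathcal{H}}\mathcal{Z},\flat_\gamma S'pr_{\mathcal{V}}\mathcal{Z})$; because $\mathcal{H}\perp_\gamma\mathcal{V}$ and the horizontal part of the subordinated tangent metric $\gamma$ is precisely $\sigma$, the covector $\flat_\gamma S'pr_{\mathcal{V}}\mathcal{Z}$ lies in $ann\,\mathcal{V}=\mathcal{H}^*$ and equals $\flat_\sigma S'pr_{\mathcal{V}}\mathcal{Z}$. On the other hand, combining the compatibility (\ref{compatGg}) with the matrix form (\ref{H}) of $H$ and the neutral metric (\ref{g}), I would write out, for $\mathcal{X},\mathcal{X}'\in\mathcal{H}$ and $\nu,\nu'\in\mathcal{H}^*$,
$$2\mathcal{G}((\mathcal{X},\nu),(\mathcal{X}',\nu'))=\beta(\mathcal{X},\mathcal{X}')+\nu(Q\mathcal{X}')+\nu'(Q\mathcal{X})+\nu'(\sharp_\sigma\nu).$$
This is the single identity that feeds all three cases.

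Then I would substitute. For $\mathcal{Z},\mathcal{Z}'\in\mathcal{H}$ one has $\mathfrak{i}^{-1}\mathcal{Z}=(\mathcal{Z},0)$, and only the $\beta$-term survives, giving $\beta(\mathcal{Z},\mathcal{Z}')$. For $\mathcal{Z},\mathcal{Z}'\in\mathcal{V}$ one has $\mathfrak{i}^{-1}\mathcal{Z}=(0,\flat_\sigma S'\mathcal{Z})$, and only the last term survives; since $\nu'(\sharp_\sigma\nu)=\sigma(S'\mathcal{Z},S'\mathcal{Z}')$, this gives the third line. The mixed case $\mathcal{Z}\in\mathcal{H},\mathcal{Z}'\in\mathcal{V}$ is the one that actually uses $\psi$: here $\mathfrak{i}^{-1}\mathcal{Z}=(\mathcal{Z},0)$ and $\mathfrak{i}^{-1}\mathcal{Z}'=(0,\flat_\sigma S'\mathcal{Z}')$, so only $\nu'(Q\mathcal{X})=\sigma(Q\mathcal{Z},S'\mathcal{Z}')$ remains, and the defining relation $\flat_\psi=-\flat_\sigma\circ Q$ from the proof of Proposition \ref{sigmapsi} turns this into $-\psi(\mathcal{Z},S'\mathcal{Z}')$. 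The remaining case $\mathcal{Z}\in\mathcal{V},\mathcal{Z}'\in\mathcal{H}$ follows by symmetry of $\mathcal{G}$.

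The computation is routine once the pieces are in place; the hard part will be purely the bookkeeping with the identifications — in particular recognizing that $\flat_\gamma S'$ collapses to $\flat_\sigma S'$ because $\sigma$ is the horizontal part of $\gamma$, and invoking the correct one of the relations (\ref{relinH}) / $\flat_\psi=-\flat_\sigma\circ Q$ in the mixed case so that the sign of $\psi$ comes out as stated in (\ref{Grondcugamma}).
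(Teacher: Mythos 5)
Your proof is correct and takes essentially the same route as the paper: both arguments unwind the identification (\ref{ij}) and combine the matrix form (\ref{H}) of $H$ with the compatibility relation (\ref{compatGg}), the neutral metric (\ref{g}), and the relation $\flat_\psi=-\flat_\sigma\circ Q$ (equivalently (\ref{betacuQ})), the key bookkeeping point in both being that $\flat_\gamma$ and $\flat_\sigma$ agree on horizontal vectors. The only difference is organizational: the paper first pushes $\mathfrak{g}|_{\mathbf{H}}$ and $H$ down to $T\mathcal{T}M$ (formulas (\ref{gF}) and (\ref{HZ}), which it reuses later in the paper), whereas you pull the tangent vectors up to $\mathcal{H}\oplus\mathcal{H}^*$ and evaluate there; the computation is identical in content.
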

\begin{proof} Using formula (\ref{ij}), it follows that the metric $\mathfrak{g}|_{\mathbf{H}}$ defined by (\ref{g}) transfers to the metric
\begin{equation}\label{gF} \mathfrak{g}_{\mathbf{H}}(\mathcal{Z},\mathfrak{Z}')=\frac{1}{2} [\sigma(S'pr_{\mathcal{V}}\mathcal{Z},pr_{\mathcal{H}}\mathcal{Z}')+
\sigma(S'pr_{\mathcal{V}}\mathcal{Z}',pr_{\mathcal{H}}\mathcal{Z})],
\end{equation} where $\mathcal{Z},\mathcal{Z}'\in T\mathcal{T}M$.
(This metric is compatible with the almost paracomplex structure $F$ defined by $F|_{\mathcal{H}}=Id,F|_{\mathcal{V}}=-Id$, i.e., $(\mathfrak{g},F)$ is an almost para-Hermitian structure.)

From the relations (\ref{relinH}) we get
\begin{equation}\label{betacuQ}\flat_\beta=
\flat_\sigma\circ(Id-Q^2)=\flat_\gamma\circ(Id-Q^2).\end{equation} Then, using
(\ref{ij}) again, we see that the identification $\mathfrak{i}$ defined by $\gamma$ sends $H$ to the almost paracomplex structure of $\mathcal{T}M$ given by
\begin{equation}\label{HZ} H\mathcal{Z}=(Q+S\circ(Id-Q^2)) pr_{\mathcal{H}}\mathcal{Z}+(S'-SQS')pr_{\mathcal{V}}\mathcal{Z},
\;\mathcal{Z}\in\chi(\mathcal{T}M).\end{equation}
Now, the required result follows from (\ref{HZ}) and (\ref{compatGg}).\end{proof}
\begin{prop}\label{SasgencuQfrak} A generalized Sasaki metric is equivalent with a pair $(\gamma,\Phi)$ where $\gamma$ is a tangent metric, $\Phi$ is a $\gamma$-symmetric endomorphism of $T\mathcal{T}M$ such that $\Phi\circ P_{\mathcal{H}}\circ\Phi=P_{\mathcal{H}}$.\end{prop}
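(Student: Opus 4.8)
The plan is to realize the correspondence through the relative endomorphism of the two metrics $\gamma$ and $\mathcal{G}$ that a generalized Sasaki metric carries on $T\mathcal{T}M$, and to recognize that the single condition $\Phi\circ P_{\mathcal H}\circ\Phi=P_{\mathcal H}$ is exactly the involutivity of the underlying para-complex structure $H$ read through the canonical identifications. A generalized Sasaki metric comes equipped with its subordinated tangent metric $\gamma$ (Definition \ref{defgenSas}), so I would define $\Phi\in End(T\mathcal{T}M)$ by $\gamma(\Phi\mathcal{Z},\mathcal{Z}')=2\mathcal{G}(\mathcal{Z},\mathcal{Z}')$, i.e.\ $\Phi=2\sharp_\gamma\circ\flat_{\mathcal{G}}$; the factor $2$ is dictated by the normalizations in (\ref{g}) and (\ref{Grondcugamma}). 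Since $\mathcal{G}$ is symmetric, $\Phi$ is automatically $\gamma$-symmetric, so that half of the statement is free; the content lies in the identity $\Phi\circ P_{\mathcal H}\circ\Phi=P_{\mathcal H}$ together with the fact that every such pair comes from a v.d.g.\ metric.

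The bridge is the neutral metric $\mathfrak{g}_{\mathbf H}$ of (\ref{gF}). Using (\ref{ij}) and the definition of the subordinated $\gamma$ (equivalently, the compatibility (\ref{tgPJ})), I would first establish $\mathfrak{g}_{\mathbf H}(\mathcal{Z},\mathcal{Z}')=\tfrac12\gamma(P_{\mathcal H}\mathcal{Z},\mathcal{Z}')$, that is $P_{\mathcal H}=2\sharp_\gamma\circ\flat_{\mathfrak{g}_{\mathbf H}}$. Let $\hat H=\mathfrak{i}\circ\mathfrak{h}(H)$ be the transfer of the para-complex structure $H$ to $T\mathcal{T}M$, explicitly (\ref{HZ}). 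Because $\mathfrak{i}\circ\mathfrak{h}$ is an isometry onto $(T\mathcal{T}M,\mathfrak{g}_{\mathbf H})$, the relations (\ref{propluiH}) transfer verbatim, so $\hat H^2=Id$, $\hat H$ is $\mathfrak{g}_{\mathbf H}$-symmetric, and by (\ref{compatGg}) one has $\mathcal{G}=\mathfrak{g}_{\mathbf H}(\hat H\,\cdot,\cdot)$. Combining these, $\flat_{\mathcal{G}}=\flat_{\mathfrak{g}_{\mathbf H}}\circ\hat H$ yields the clean formula $\Phi=P_{\mathcal H}\circ\hat H$, equivalently $\hat H=P_{\mathcal H}\circ\Phi$ since $P_{\mathcal H}^2=Id$.

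From this formula the two reformulations are immediate and dual to each other: $\hat H^2=Id$ becomes $P_{\mathcal H}\Phi P_{\mathcal H}\Phi=Id$, i.e.\ $\Phi\circ P_{\mathcal H}\circ\Phi=P_{\mathcal H}$, while the $\mathfrak{g}_{\mathbf H}$-symmetry of $\hat H$ becomes, after substituting $\mathfrak{g}_{\mathbf H}=\tfrac12\gamma(P_{\mathcal H}\cdot,\cdot)$, precisely the $\gamma$-symmetry of $\Phi$. Thus $(\gamma,\Phi)$ encodes exactly $(\mathcal{H},\sigma)$ through $\gamma$ and an $\mathfrak{g}$-symmetric involution $H$ through $\Phi$, i.e.\ the data of (\ref{propluiH}); running Proposition \ref{sigmapsi} backwards reconstructs $(\sigma,\psi)$ and hence the v.d.g.\ metric whose $\mathfrak{i}\circ\mathfrak{h}$-transfer is $\mathcal{G}$. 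The algebraic identities (\ref{relinH}) then fall out of the block form of $\hat H=P_{\mathcal H}\Phi$ mechanically.

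The step I expect to be the main obstacle is the non-degeneracy bookkeeping, since Proposition \ref{sigmapsi} does not apply to an arbitrary $\mathfrak{g}$-symmetric involution: one also needs $\pi^{-1}(T^*M)\cap V_\pm=0$, equivalently the non-degeneracy of $\mathcal{G}|_{\pi^{-1}(T^*M)}$ required in Definition \ref{defgenmetric}. Tracing $\pi^{-1}(T^*M)$ through $\mathfrak{i}\circ\mathfrak{h}$ via (\ref{ij}) identifies it with the vertical bundle $\mathcal{V}$, so this becomes the non-degeneracy of $\mathcal{G}|_{\mathcal V}$; by (\ref{Grondcugamma}) that restriction equals $\tfrac12\sigma(S'\cdot,S'\cdot)=\tfrac12\gamma_{\mathcal V}$, which holds for a genuine generalized Sasaki metric and pins the vertical block $pr_{\mathcal V}\circ\Phi|_{\mathcal V}$ to the identity. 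In the reverse direction I would therefore have to verify that the cometric $\sigma$ extracted from $\hat H=P_{\mathcal H}\Phi$ coincides with the non-degenerate $\gamma_{\mathcal H}$, and not with a degenerate form, by computing the $\pi^{-1}(TM)$-component of $H$ on $\pi^{-1}(T^*M)$ and checking it equals $\sharp_{\gamma_{\mathcal H}}$ exactly when this vertical normalization holds; organizing the argument so that this one non-degeneracy is transparently equivalent on both sides is the delicate part of the proof.
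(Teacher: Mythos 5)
Your proof is correct and follows essentially the same route as the paper: the paper likewise sets $\Phi=P_{\mathcal{H}}\circ H$, uses $\mathfrak{g}_{\mathbf{H}}=\tfrac{1}{2}\gamma(P_{\mathcal{H}}\cdot,\cdot)$ (its formula (\ref{gF})) to translate the first relation of (\ref{propluiH}) into the $\gamma$-symmetry condition (\ref{GgammaPH}), and obtains $\Phi\circ P_{\mathcal{H}}\circ\Phi=P_{\mathcal{H}}$ from $H^{2}=Id$ together with $P_{\mathcal{H}}^{2}=Id$. Your closing discussion of the non-degeneracy bookkeeping (that the bijection also requires $pr_{\mathcal{V}}\circ\Phi|_{\mathcal{V}}=Id$, equivalently $\mathcal{G}|_{\mathcal{V}}=\tfrac{1}{2}\gamma_{\mathcal{V}}$, as the example $\Phi=P_{\mathcal{H}}$ would otherwise show) is a genuine refinement of a point the paper's proof passes over in silence when asserting the correspondence.
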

\begin{proof}
For a generalized Sasaki metric, formula (\ref{gF}) and the definition $P_{\mathcal{H}}=S+S'$ imply the equivalence between the first equality (\ref{propluiH}) and the equality
\begin{equation}\label{GgammaPH}
\gamma((P_{\mathcal{H}}\circ H)\mathcal{Z},\mathcal{Z}')= \gamma(\mathcal{Z},(P_{\mathcal{H}}\circ H)\mathcal{Z}').\end{equation}
This shows that the generalized Sasaki metrics are in a bijective correspondence with pairs $(\gamma,H)$ where $\gamma$ is a tangent metric, $H$ is an almost product structure and (\ref{GgammaPH}) holds. Therefore, if we take $\Phi=P_{\mathcal{H}}\circ H$, the required conclusion follows because $P_{\mathcal{H}}^2=Id.$ \end{proof}

Notice also the following consequence of formulas (\ref{gF}), (\ref{compatGg}) and (\ref{tgPJ}): $$\mathcal{G}(\mathcal{Z},\mathcal{Z}') =\frac{1}{2}\gamma(H\mathcal{Z},P_{\mathcal{H}}\mathcal{Z}') =\frac{1}{2}\gamma((P_{\mathcal{H}}\circ H)\mathcal{Z},\mathcal{Z}').$$

If $\psi=0$, we have $Q=0,H=P_{\mathcal{H}},\Phi=Id$. Furthermore, if $\sigma$ is defined by a metric of $M$ and if $\mathcal{H}$ is the horizontal bundle of the Levi-Civita connection of $\sigma$, then, (\ref{Grondcugamma}) is half the usual Sasaki metric.

If $\mathcal{G}$ is a generalized Sasaki metric and $H$ is the corresponding para-Hermitian structure, the pair $(\mathcal{G},H)$ is a {\it (pseudo-)Riemannian almost para-complex-product structure}, i.e., it satisfies the	 condition $\mathcal{G}(H\mathcal{Z},\mathcal{Z}')= \mathcal{G}(\mathcal{Z},H\mathcal{Z}')$ and $H$ is almost para-complex. We will consider linear connections of $\mathcal{T}M$ that preserve the structure $(\mathcal{G},H)$. Since this is equivalent with the preservation of $(\mathcal{G},\mathfrak{g}_{\mathbf{H}})$, the connections are {\it double metric connections}.
\begin{prop}\label{propexprconex} Let $\mathcal{D}$ be a double metric connection on $\pi^{-1}(\mathbf{T}M)\approx\mathbf{H}$ with the corresponding pair $D^\pm$ of connections on $\mathcal{H}$. Define the connection $D^0=(D^++D^-)/2$ and put
$$D^\pm_{\mathcal{Z}}\mathcal{X}=D^0_{\mathcal{Z}}\mathcal{X}
\pm\lambda_{\mathcal{Z}}\mathcal{X},$$
where $\mathcal{Z}\in T\mathcal{T}M,\mathcal{X},\mathcal{X}'\in\mathcal{H}$
and $\sigma(\lambda_{\mathcal{Z}}\mathcal{X},\mathcal{X}')
+\sigma(\mathcal{X},\lambda_{\mathcal{Z}}\mathcal{X}')=0$.
Then, $\mathfrak{i}$ transfers the connection $\mathcal{D}$ to the following linear connection of the manifold $\mathcal{T}M$
\begin{equation}\label{cazconexgen} \begin{array}{l}
\mathcal{D}_{\mathcal{Z}}(S\mathcal{X}) =SD^0_{\mathcal{Z}}\mathcal{X}+(Id-SQ) \lambda_{\mathcal{Z}}\mathcal{X},\vspace*{2mm}\\ \mathcal{D}_{\mathcal{Z}}\mathcal{X}=
D^0_{\mathcal{Z}}\mathcal{X}+\lambda_{\mathcal{Z}}(Q\mathcal{X}) +S[(D^0_{\mathcal{Z}}Q)\mathcal{X}+\lambda_{\mathcal{Z}}\mathcal{X} -Q\lambda_{\mathcal{Z}}(Q\mathcal{X})].
\end{array}\end{equation}\end{prop}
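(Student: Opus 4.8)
The plan is to push everything through the two eigenbundle isomorphisms $\iota_\pm\colon\mathcal{H}\to V_\pm$ of Remark \ref{obsiota} followed by $\mathfrak{i}$, so that the whole computation collapses to linear algebra on $\mathcal{H}$ together with a single Leibniz expansion. First I would make the composites $e_\pm:=\mathfrak{i}\circ\iota_\pm\colon\mathcal{H}\to T\mathcal{T}M$ explicit. Using $\iota_\pm\mathcal{X}=(\mathcal{X},\flat_{\psi\pm\sigma}\mathcal{X})$ from Remark \ref{obsiota}, the relation $\flat_\psi=-\flat_\sigma\circ Q$ defining $\psi$, and the fact that the subordinated metric satisfies $\gamma_{\mathcal{H}}=\sigma$ (so that inverting (\ref{ij}) gives $\mathfrak{i}(\mathcal{X},\alpha)=\mathcal{X}+S\sharp_\sigma\alpha$), I obtain, after noting $\flat_{\psi\pm\sigma}=\flat_\sigma\circ(\pm Id-Q)$,
$$e_\pm(\mathcal{X})=\mathcal{X}\pm S\mathcal{X}-SQ\mathcal{X}.$$
Taking the sum and difference yields the inversion formulas $2S\mathcal{X}=e_+(\mathcal{X})-e_-(\mathcal{X})$ and $2(Id-SQ)\mathcal{X}=e_+(\mathcal{X})+e_-(\mathcal{X})$, which recover the vertical reflection $S\mathcal{X}$ and the horizontal vector $\mathcal{X}$ from the $V_\pm$-frames; since $T\mathcal{T}M=\mathcal{H}\oplus\mathcal{V}$ is spanned by the $\mathcal{X}$ and the $S\mathcal{X}$, computing $\mathcal{D}_{\mathcal{Z}}(S\mathcal{X})$ and $\mathcal{D}_{\mathcal{Z}}\mathcal{X}$ determines the transferred connection completely.

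Next I would invoke the transfer rule. By the definition of the transferred connection together with (\ref{Dpm}) one has $\mathcal{D}_{\mathcal{Z}}(e_\pm\mathcal{X})=e_\pm(D^\pm_{\mathcal{Z}}\mathcal{X})$. Substituting $D^\pm_{\mathcal{Z}}=D^0_{\mathcal{Z}}\pm\lambda_{\mathcal{Z}}$ and using linearity of $e_\pm$, the difference $\frac{1}{2}\mathcal{D}_{\mathcal{Z}}(e_+\mathcal{X}-e_-\mathcal{X})$ regroups into $(e_+-e_-)(D^0_{\mathcal{Z}}\mathcal{X})$ and $(e_++e_-)(\lambda_{\mathcal{Z}}\mathcal{X})$ terms, which by the inversion formulas equal
$$\mathcal{D}_{\mathcal{Z}}(S\mathcal{X})=SD^0_{\mathcal{Z}}\mathcal{X}+(Id-SQ)\lambda_{\mathcal{Z}}\mathcal{X}.$$
This is the first line of (\ref{cazconexgen}), obtained purely from sum/difference bookkeeping.

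For the second line I would write $\mathcal{X}=\frac{1}{2}(e_+\mathcal{X}+e_-\mathcal{X})+SQ\mathcal{X}$ from the inversion formula and differentiate. The first summand is handled exactly as above and contributes $(Id-SQ)D^0_{\mathcal{Z}}\mathcal{X}+S\lambda_{\mathcal{Z}}\mathcal{X}$. The term $\mathcal{D}_{\mathcal{Z}}(SQ\mathcal{X})$ is evaluated by applying the already-established first line with $\mathcal{X}$ replaced by the horizontal section $Q\mathcal{X}$, and then expanding by the Leibniz rule $D^0_{\mathcal{Z}}(Q\mathcal{X})=(D^0_{\mathcal{Z}}Q)\mathcal{X}+Q\,D^0_{\mathcal{Z}}\mathcal{X}$ for the endomorphism $Q$. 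Adding the two contributions, the $SQ\,D^0_{\mathcal{Z}}\mathcal{X}$ term cancels the $-SQ$ part of $(Id-SQ)D^0_{\mathcal{Z}}\mathcal{X}$, leaving $D^0_{\mathcal{Z}}\mathcal{X}$, and collecting the remaining $S[\cdots]$ terms gives exactly the second line of (\ref{cazconexgen}).

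The only genuinely delicate point is the first step, pinning down $e_\pm$: one must keep straight that the three $\flat/\sharp$ operators refer to the correct metrics (here $\gamma_{\mathcal{H}}=\sigma$), that $\flat_{\psi\pm\sigma}=\flat_\sigma\circ(\pm Id-Q)$ via $\flat_\psi=-\flat_\sigma\circ Q$, and that $\mathfrak{i}$ sends a cotangent entry $\alpha$ to $S\sharp_\sigma\alpha$ according to (\ref{ij}). Once the identity $e_\pm(\mathcal{X})=\mathcal{X}\pm S\mathcal{X}-SQ\mathcal{X}$ is secured, everything afterwards is routine linear algebra and the single Leibniz expansion described above.
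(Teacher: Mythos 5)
Your proposal is correct and follows essentially the same route as the paper: compute the horizontal version of $\iota_\pm$ (your $e_\pm(\mathcal{X})=\mathcal{X}\pm S\mathcal{X}-SQ\mathcal{X}$ is exactly the paper's formula), form the sum and difference identities, and then apply the transferred rule $\mathcal{D}_{\mathcal{Z}}(e_\pm\mathcal{X})=e_\pm(D^\pm_{\mathcal{Z}}\mathcal{X})$ with $D^\pm=D^0\pm\lambda$ and resubstitute. The only difference is that you carry out explicitly the algebra (including the Leibniz expansion of $D^0_{\mathcal{Z}}(Q\mathcal{X})$ and the cancellation of the $SQ\,D^0_{\mathcal{Z}}\mathcal{X}$ terms) that the paper compresses into one sentence.
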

\begin{proof} The pair	$D^\pm$ was defined in Section 2.2. Calculating the mapping $\mathfrak{i}\circ\mathfrak{h}\circ\iota_\pm$ we get the horizontal version of $\iota_\pm$:
$$ \iota_\pm\mathcal{X}=\mathcal{X}+ S\sharp_\sigma\flat_{\psi\pm\sigma}\mathcal{X}= \mathcal{X}-SQ\mathcal{X}\pm S\mathcal{X},\; \mathcal{X}\in\mathcal{H},$$
whence
\begin{equation}\label{auxiotapm}
\iota_+\mathcal{X}-\iota_-\mathcal{X}=2S\mathcal{X},\;
\iota_+\mathcal{X}+\iota_-\mathcal{X}=2(\mathcal{X}-SQ\mathcal{X}).
\end{equation}
If we solve (\ref{auxiotapm}) for $\mathcal{X},S\mathcal{X}$, calculate $\mathcal{D}_{\mathcal{Z}}$ with (\ref{Dpm}) and use again (\ref{auxiotapm}) for the results, we get (\ref{cazconexgen}).\end{proof}

If $D^+=D^-$, $\lambda=0$ and the connection $\mathcal{D}$ preserves the vertical bundle. In the definition below we introduce the simplest canonical double metric connection.
\begin{defin}\label{defdmCartan} {\rm The double metric connection defined by the pair $D^+=D^-=\nabla^{\mathcal{C}}|_{\mathcal{H}}$, where $\nabla^{\mathcal{C}}$ is the Cartan connection (\ref{conexCartan}), will be called the {\it double metric Cartan connection} and it will be denoted by $\mathcal{D}^{\mathcal{C}}$.}\end{defin}

Using formulas (\ref{cazconexgen}) and (\ref{conexCartan}), we get
\begin{equation}\label{dmC} \begin{array}{l}
\mathcal{D}^{\mathcal{C}}_{\mathcal{X}}\mathcal{X}'=
\nabla^{D}_{\mathcal{X}}\mathcal{X}' +S((\nabla^{D}_{\mathcal{X}}Q)\mathcal{X}'), \,\mathcal{D}^{\mathcal{C}}_{\mathcal{X}}\mathcal{Y}= S\nabla^{D}_{\mathcal{X}}(S'\mathcal{Y}),\vspace*{2mm}\\ \mathcal{D}^{\mathcal{C}}_{\mathcal{Y}}\mathcal{X}=
\nabla^{D}_{\mathcal{Y}}\mathcal{X} +\Psi(\mathcal{Y},\mathcal{X}) +S[(\nabla^{D}_{\mathcal{Y}}Q)\mathcal{X}
+\Psi(\mathcal{Y},Q\mathcal{X})-Q\Psi(\mathcal{Y},\mathcal{X})],
\vspace*{2mm}\\ \mathcal{D}^{\mathcal{C}}_{\mathcal{Y}}\mathcal{Y}'= S[\nabla^{D}_{\mathcal{Y}}(S'\mathcal{Y}')+\Psi(\mathcal{Y},S'\mathcal{Y}')],
\end{array}
\end{equation}
where $\mathcal{X},\mathcal{X}'\in\mathcal{H},\mathcal{Y},\mathcal{Y}'\in\mathcal{V}$.

Following are the results of the standard computation of the torsion and curvature of connection $\mathcal{D}^{\mathcal{C}}$ and some consequences.

From (\ref{dmC}) and (\ref{torsBott}), we get
\begin{equation}\label{torsCartan} \begin{array}{l}
T_{\mathcal{D}^{\mathcal{C}}}(\mathcal{X},\mathcal{X}') = -pr_{\mathcal{V}} [\mathcal{X},\mathcal{X}'] - S[\nabla^D_{\mathcal{X}}(Q\mathcal{X}') -\nabla^D_{\mathcal{X}'}(Q\mathcal{X}) \vspace*{2mm}\\ \hspace*{15mm}-Qpr_{\mathcal{H}}[\mathcal{X},\mathcal{X}']]
,\vspace*{2mm}\\ T_{{\mathcal{D}}^{\mathcal{C}}}(\mathcal{Y},\mathcal{Y}') =
S[\Psi(\mathcal{Y},S'\mathcal{Y}')-\Psi(\mathcal{Y}',S'\mathcal{Y})],\vspace*{2mm}\\ T_{\mathcal{D}^{\mathcal{C}}}(\mathcal{X},\mathcal{Y}) =-\nabla^D_{\mathcal{X}}\mathcal{Y}-\Psi(\mathcal{Y},\mathcal{X})
+S[\nabla^D_{\mathcal{X}}(S'\mathcal{Y})- (\nabla^D_{\mathcal{Y}}Q)\mathcal{X}
\vspace*{2mm}\\ \hspace*{15mm}-\Psi(\mathcal{Y},Q\mathcal{X}) +Q\Psi(\mathcal{Y},\mathcal{X})],
\end{array}\end{equation}
where the second formula holds at every point and it holds for vector fields $\mathcal{Y},\mathcal{Y}'$ if the fields $S'\mathcal{Y},S'\mathcal{Y}'$ are assumed to be projectable to $M$. For arbitrary, vertical vector fields, we have
$$T_{{\mathcal{D}}^{\mathcal{C}}}(\mathcal{Y},\mathcal{Y}') = \nabla^D_{\mathcal{Y}}(S'\mathcal{Y}') -\nabla^D_{\mathcal{Y}'}(S'\mathcal{Y})-[\mathcal{Y},\mathcal{Y}'] +S[\Psi(\mathcal{Y},S'\mathcal{Y}')-\Psi(\mathcal{Y}',S'\mathcal{Y})].$$ If $S'\mathcal{Y},S'\mathcal{Y}'$ are projectable, the first two terms vanish and so does the third, since the vanishing of the Nijenhuis tensor (\ref{Nij}) of $S$ yields
$$ [\mathcal{Y},\mathcal{Y}']=[S(S'\mathcal{Y}),S(S'\mathcal{Y}')]= S([\mathcal{Y},S'\mathcal{Y}']
+[S'\mathcal{Y},\mathcal{Y}'])=0$$
($S$ is applied to a vertical field).

Formulas (\ref{torsCartan}) lead to an expression of the Levi-Civita connection $\mathcal{D}^{\mathcal{G}}$ of a generalized Sasaki metric $\mathcal{G}$:
\begin{equation}\label{exprLCcuCartan}
D^{\mathcal{G}}_{\mathcal{Z}}\mathcal{Z}'= \mathcal{D}^{\mathcal{C}}_{\mathcal{Z}}\mathcal{Z}' -\frac{1}{2}T_{\mathcal{D}^{\mathcal{C}}}(\mathcal{Z},\mathcal{Z}') +\frac{1}{2}\Xi(\mathcal{Z},\mathcal{Z}'),\end{equation}
where $\mathcal{Z},\mathcal{Z}'\in T\mathcal{TM}$ and $\Xi$ is defined by
$$\mathcal{G}(\Xi(\mathcal{Z},\mathcal{Z}'),\mathcal{Z}'') =\mathcal{G}(\mathcal{Z}',T_{\mathcal{D}^{\mathcal{C}}} (\mathcal{Z},\mathcal{Z}'')) +\mathcal{G}(\mathcal{Z},T_{\mathcal{D}^{\mathcal{C}}} (\mathcal{Z}',\mathcal{Z}'')).$$ (Check that the connection defined by the right hand side of (\ref{exprLCcuCartan}) preserves $\mathcal{G}$ and has no torsion.)

In view of (\ref{Rpm}), the calculations that gave formulas (\ref{cazconexgen}) hold for the curvature operator too and we get
\begin{equation}\label{curbCartan} \begin{array}{l}
R_{\mathcal{D}^{\mathcal{C}}}(\mathcal{Z},\mathcal{Z}')(\mathcal{X}) =
R_{\nabla^{\mathcal{C}}}(\mathcal{Z},\mathcal{Z}')(\mathcal{X}) +S[R_{\nabla^{\mathcal{C}}}(\mathcal{Z},\mathcal{Z}')(Q\mathcal{X})\vspace*{2mm}\\	 \hspace*{15mm}-QR_{\nabla^{\mathcal{C}}}(\mathcal{Z},\mathcal{Z}')(\mathcal{X})], \vspace*{2mm}\\ R_{\mathcal{D}^{\mathcal{C}}}(\mathcal{Z},\mathcal{Z}')(S\mathcal{X}) =
SR_{\nabla^{\mathcal{C}}}(\mathcal{Z},\mathcal{Z}')(\mathcal{X}).
\end{array}\end{equation}

We may also express $R_{\mathcal{D}^{\mathcal{C}}}$ by means of $R_{\nabla^D}$, where $\nabla^D$ is the canonical connection of the subordinated tangent metric $\gamma$ by using the known relation between the curvature of two connections, which is
\begin{equation}\label{twocurvatures}
\begin{array}{l}
R_{\nabla^{\mathcal{C}}}(\mathcal{Z},\mathcal{Z}')\mathcal{Z}''= R_{\nabla^D}(\mathcal{Z},\mathcal{Z}')\mathcal{Z}'' +\nabla^D_{\mathcal{Z}}(\Psi(\mathcal{Z}',\mathcal{Z}'')) -\nabla^D_{\mathcal{Z}'}(\Psi(\mathcal{Z},\mathcal{Z}''))\vspace*{2mm}\\ \hspace*{23mm}\,+\Psi(\mathcal{Z},\nabla^D_{\mathcal{Z}'}\mathcal{Z}'')- \Psi(\mathcal{Z}',\nabla^D_{\mathcal{Z}}\mathcal{Z}'') -\Psi([\mathcal{Z},\mathcal{Z}'],\mathcal{Z}'')
 \vspace*{2mm}\\ \hspace*{23mm}\,+\Psi(\mathcal{Z},\Psi(\mathcal{Z}',\mathcal{Z}''))
-\Psi(\mathcal{Z}',\Psi(\mathcal{Z},\mathcal{Z}'')).\end{array}\end{equation}
The full expression of (\ref{twocurvatures}) occurs if $\mathcal{Z},\mathcal{Z}'\in\mathcal{V},\mathcal{Z}''\in\mathcal{H}$. In the other cases, if we denote by $\mathcal{X},\mathcal{Y}$ horizontal and vertical arguments, respectively, since $\Psi$ vanishes unless its arguments are $(\mathcal{Y},\mathcal{X})$, we have
\begin{equation}\label{curbCD0}
\begin{array}{l}
R_{\nabla^{\mathcal{C}}}(\mathcal{Z},\mathcal{Z}')\mathcal{Y}= R_{\nabla^D}(\mathcal{Z},\mathcal{Z}')\mathcal{Y},\vspace*{2mm}\\
R_{\nabla^{\mathcal{C}}}(\mathcal{X},\mathcal{X}')\mathcal{X}''= R_{\nabla^D}(\mathcal{X},\mathcal{X}')\mathcal{X}'' -\Psi(pr_{\mathcal{V}}[\mathcal{X},\mathcal{X}'],\mathcal{X}''),\vspace*{2mm}\\
R_{\nabla^{\mathcal{C}}}(\mathcal{X},\mathcal{Y})\mathcal{X}'= R_{\nabla^D}(\mathcal{X},\mathcal{Y})\mathcal{X}' +\nabla^D_{\mathcal{X}}(\Psi(\mathcal{Y},\mathcal{X}'))\vspace*{2mm}\\
\hspace*{23mm}-\Psi(\mathcal{Y},\nabla^D_{\mathcal{X}}\mathcal{X}')
-\Psi(pr_{\mathcal{V}}[\mathcal{X},\mathcal{Y}],\mathcal{X}').
\end{array}\end{equation}
In all the cases, the difference between the two curvatures is horizontal.
\begin{prop}\label{Cartan=LC} Let $\mathcal{G}$ be a generalized Sasaki metric of $\mathcal{T}M$ defined by the horizontal lift of a pair $(s,\zeta)$, where $s$ is a (pseudo-)Riemannian metric of $M$ and $\zeta\in\Omega^2(M)$ is a $2$-form that is parallel with respect to the Levi-Civita connection of $s$. Then, the double metric Cartan connection $\mathcal{D}^{\mathcal{C}}$ of $\mathcal{G}$ is equal to its Levi-Civita connection $D^{\mathcal{G}}$ iff the metric $s$ is flat and $\mathcal{H}$ is the horizontal bundle of the Levi-Civita connection of $s$.\end{prop}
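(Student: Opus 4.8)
The plan is to reduce the equality $\mathcal{D}^{\mathcal{C}}=D^{\mathcal{G}}$ to the vanishing of the torsion of $\mathcal{D}^{\mathcal{C}}$. Since $\mathcal{D}^{\mathcal{C}}$ is a double metric connection it preserves $\mathcal{G}$, while $D^{\mathcal{G}}$ is the \emph{unique} torsion-free $\mathcal{G}$-metric connection; hence $\mathcal{D}^{\mathcal{C}}=D^{\mathcal{G}}$ if and only if $T_{\mathcal{D}^{\mathcal{C}}}=0$ (equivalently, in (\ref{exprLCcuCartan}) the correction $-\tfrac12 T_{\mathcal{D}^{\mathcal{C}}}+\tfrac12\Xi$ vanishes). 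This reduction handles both implications at once, so the whole problem becomes the analysis of the three torsion formulas (\ref{torsCartan}). I would first record the simplifications forced by the hypothesis: since $\sigma$ is the pullback of $s$, its components $g_{ij}=g_{ij}(x)$ are velocity-independent, so the Cartan tensor $C$ vanishes and therefore $\Psi=0$; this already makes the vertical--vertical torsion $T_{\mathcal{D}^{\mathcal{C}}}(\mathcal{Y},\mathcal{Y}')$ vanish identically.

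Next I would establish two parallelism facts. Applying the Koszul formula for the Levi-Civita connection of $\gamma$, and using $\mathcal{H}\perp_\gamma\mathcal{V}$ together with the velocity-independence of $g$ (the bracket term drops out because $[\mathcal{X},\mathcal{X}']$ is vertical), one sees that the horizontal part of $\nabla^D$ on horizontal sections coincides with the pullback of the Levi-Civita connection $\nabla^s$ of $s$; in coordinates its coefficients are the Christoffel symbols $\Gamma^k_{ij}$ of $s$, independently of the choice of $\mathcal{H}$. Because $Q$ is the lift of $Q_0=-\sharp_s\flat_\zeta$ and $\zeta$ is $\nabla^s$-parallel, it then follows that $\nabla^D Q=0$: the vertical derivatives vanish since $Q$ is a lift, and the horizontal derivatives reduce to $\nabla^s Q_0=0$.

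With $\Psi=0$ and $\nabla^D Q=0$, the two surviving torsion components collapse to transparent geometric quantities. In the horizontal--horizontal component I would cancel the $Q$-terms against the identity $\nabla^D_{\mathcal{X}}\mathcal{X}'-\nabla^D_{\mathcal{X}'}\mathcal{X}=pr_{\mathcal{H}}[\mathcal{X},\mathcal{X}']$ (which holds because $D$ is torsion-free), leaving $T_{\mathcal{D}^{\mathcal{C}}}(\mathcal{X},\mathcal{X}')=-pr_{\mathcal{V}}[\mathcal{X},\mathcal{X}']=-R_{\mathcal{H}}(\mathcal{X},\mathcal{X}')$, the Ehresmann curvature. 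In the horizontal--vertical component the vertical derivative of the lifted $Q$ drops out and what remains reorganizes into $T_{\mathcal{D}^{\mathcal{C}}}(\mathcal{X},\mathcal{Y})=-(\nabla^D_{\mathcal{X}}S)(S'\mathcal{Y})$, the horizontal covariant derivative of the tangent structure. Thus $T_{\mathcal{D}^{\mathcal{C}}}=0$ if and only if $R_{\mathcal{H}}=0$ and $\nabla^D S=0$ simultaneously.

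It remains to interpret these two conditions. In the basis (\ref{Hbases}), $\nabla^D S=0$ reads $\partial t^k_i/\partial y^j=\Gamma^k_{ij}$; since the right-hand side is velocity-independent, $t^k_i$ is affine in $y$, and the homogeneity of the nonlinear connection (Euler's theorem) forces the constant term to vanish, giving $t^k_i=\Gamma^k_{ij}y^j$, which is exactly the assertion that $\mathcal{H}$ is the nonlinear connection of $\nabla^s$. For this $\mathcal{H}$ the Ehresmann curvature is $R^k_{ij}=R^k_{lij}(s)\,y^l$, so $R_{\mathcal{H}}=0$ is equivalent to the vanishing of the Riemann tensor of $s$, i.e. to flatness of $s$; combining these yields the stated equivalence in both directions. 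The delicate point, and the main obstacle, is the necessity direction: one must genuinely produce $\nabla^D Q=0$ so that the horizontal torsion isolates $R_{\mathcal{H}}$ rather than mixing curvature with a derivative of $Q$, and one must exclude a closed, $y$-independent shift $c^k_i(x)$ when solving $\partial t^k_i/\partial y^j=\Gamma^k_{ij}$. The $\nabla^s$-parallelism of $\zeta$ disposes of the first issue, and the homogeneity of the nonlinear connection disposes of the second.
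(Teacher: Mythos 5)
Your reduction of $\mathcal{D}^{\mathcal{C}}=D^{\mathcal{G}}$ to $T_{\mathcal{D}^{\mathcal{C}}}=0$, the vanishing of the Cartan tensor and of $\Psi$, the parallelism $\nabla^D Q=0$, and the resulting simplification of the three torsion components are all correct and reproduce exactly the paper's intermediate formulas (\ref{torsinprop}); in particular your $-(\nabla^D_{\mathcal{X}}S)(S'\mathcal{Y})$ is the same tensor as the paper's $S(\nabla^D_{\mathcal{X}}S')(\mathcal{Y})$, and your Koszul argument identifying the horizontal part of $\nabla^D$ with the pull-back of the Levi-Civita connection of $s$ is sound. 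Up to that point you follow the paper's route. The two arguments separate at the last step: the paper first exploits the horizontal--horizontal torsion (integrability of $\mathcal{H}$) and the coordinate-change rule (\ref{transft}) to produce a locally affine structure on $M$ whose horizontal bundle is $\mathcal{H}$, and only afterwards uses the mixed torsion to force the Christoffel symbols of $s$ to vanish in the affine coordinates; you instead integrate the mixed-torsion equation $\partial t^k_i/\partial y^j=\Gamma^k_{ij}$ first and then appeal to ``homogeneity of the nonlinear connection'' to kill the integration constant $c^k_i(x)$.

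That appeal is a genuine gap. In this paper a horizontal bundle is an \emph{arbitrary} complement of $\mathcal{V}$: no positive $1$-homogeneity of the coefficients $t^k_i$ in $y$ is assumed anywhere (homogeneity holds for connections induced by sprays or Finsler metrics, but it is not part of the definition of $\mathcal{H}$, and the proposition's conclusion only has content if $\mathcal{H}$ ranges over general horizontal bundles). So Euler's theorem is not available, and nothing in your argument excludes the shift. The issue is not cosmetic: take $M=\mathbb{R}^m$, $s$ the Euclidean metric, $\zeta=0$, and $t^k_i=\partial f^k/\partial x^i$ for non-constant functions $f^k(x)$. Then $\Gamma^k_{ij}=0$, so your mixed-torsion equation holds ($\partial t^k_i/\partial y^j=0$), and $\mathcal{H}$ is integrable (its leaves are the level sets of $y^k+f^k(x)$), so the horizontal--horizontal torsion vanishes too; yet $\mathcal{H}$ is not the horizontal bundle of the Levi-Civita connection of $s$. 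Hence the torsion equations you derived cannot, by themselves, pin down $\mathcal{H}$; one must either argue as the paper does (integrability of $\mathcal{H}$ plus the analysis based on (\ref{transft})) or impose homogeneity of $\mathcal{H}$ as an explicit extra hypothesis --- and the example above shows that some such restriction is genuinely needed for the ``only if'' direction, not merely convenient. Note that your ordering does still yield flatness of $s$ even with the unresolved shift, since the part of the Ehresmann curvature linear in $y$ is the Riemann tensor of $s$ contracted with $y$; the gap concerns only, but crucially, the identification of $\mathcal{H}$, which is exactly the ``delicate point'' you flagged and whose proposed resolution does not hold under the paper's definitions.
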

\begin{proof} Consider the horizontal lifts $\sigma=s^h,\psi=\zeta^h$.
Since $\sigma$ is projectable, the Cartan tensor and the tensor $\Psi$ vanish and $\nabla^{\mathcal{C}}$ coincides with the canonical connection $\nabla^D$. Furthermore, the Levi-Civita parallelism of $s,\zeta$ implies $\nabla^D_{\mathcal{X}}\sigma=0,\nabla^D_{\mathcal{X}}\psi=0$ for $\mathcal{X}\in\mathcal{H}$ and, on the other hand, we get $\nabla^D_{\mathcal{Y}}\sigma=0,\nabla^D_{\mathcal{Y}}\psi=0$ by evaluating on projectable vector fields. Thus, $\sigma,\psi$ are $\nabla^D$-parallel tensors and $Q=-\sharp_\sigma\circ\flat_\psi$ yields $\nabla^D_{\mathcal{Z}}Q=0$. Correspondingly, (\ref{dmC}), (\ref{torsCartan}) give the following results
\begin{equation}\label{torsinprop}
\begin{array}{l}
\mathcal{D}^{\mathcal{C}}_{\mathcal{Z}}\mathcal{X} =\nabla^D_{\mathcal{Z}}\mathcal{X},\, \mathcal{D}^{\mathcal{C}}_{\mathcal{Z}}\mathcal{Y} =S\circ\nabla^D_{\mathcal{Z}}\circ S'(\mathcal{Y}),\vspace*{2mm}\\
T_{\mathcal{D}^{\mathcal{C}}}(\mathcal{X},\mathcal{X}') =-pr_{\mathcal{V}}[\mathcal{X},\mathcal{X}'],\, T_{\mathcal{D}^{\mathcal{C}}}(\mathcal{Y},\mathcal{Y}') =0,\vspace*{2mm}\\
T_{\mathcal{D}^{\mathcal{C}}}(\mathcal{X},\mathcal{Y}) =S(\nabla^D_{\mathcal{X}}S')(\mathcal{Y}),
\end{array}\end{equation}
where $\mathcal{X}\in\mathcal{H},\mathcal{Y}\in\mathcal{V},\mathcal{Z}\in T\mathcal{T}M$.

$\mathcal{D}^{\mathcal{C}}$ is the Levi-Civita connection of $\mathcal{G}$ iff $T_{\mathcal{D}^{\mathcal{C}}}=0$.
With the torsion formulas (\ref{torsinprop}), we see that $T_{\mathcal{D}^{\mathcal{C}}}(\mathcal{X},\mathcal{X}')=0$ iff $\mathcal{H}$ is integrable.
We claim that $\mathcal{H}$ is integrable iff $M$ is a locally affine manifold and $\mathcal{H}$ is the horizontal bundle associated with the natural, flat, torsionless linear connection of the affine structure of $M$. Indeed, if $M,\mathcal{H}$ are as indicated and if $(x^i)$ are local affine coordinates on $M$, then, we have $X_i=\partial/\partial x^i$, therefore, $\mathcal{H}$ is integrable. Conversely, if $\mathcal{H}$ is integrable, $\mathcal{T}M$ has an atlas of local coordinates $(u^i,v^j)$ such that $\mathcal{H}=span\{\partial/\partial u^i\}$, $\mathcal{V}=span\{\partial/\partial v^i\}$.

Consequently, the local system of equations $u^i=const.$ is equivalent to $x^i=const.$ and $u^i$ are lifts of local coordinates of $M$ with coordinate transformations of the local form $\tilde{u}^i=\tilde{u}^i(u^j)$. With respect to the coordinates $(u^i,\dot{u}^i)$ of $\mathcal{T}M$, where $\dot{u}^i$ are vector coordinates on $M$ with respect to the tangent basis $\partial/\partial u^i$, the coefficients $t_i^j$ of the nonlinear connection $\mathcal{H}$ given by (\ref{Hbases}) vanish. But, the transformation rule (\ref{transft}) tells us that we can have $t_i^j=0,\tilde{t}_i^j=0$ under a coordinate changes $\tilde{u}^i=\tilde{u}^i(u^j)$ iff $\partial\tilde{u}^i/\partial u^j=const.$ Therefore, if $\mathcal{H}$ is integrable, $M$ has a locally affine structure with the local, affine coordinates $(u^i)$ and $\mathcal{H}$ is the horizontal bundle of the corresponding flat, torsionless linear connection.

Finally, for $T_{\mathcal{D}^{\mathcal{C}}}(\mathcal{X},\mathcal{Y})=0$, it suffices to take $\mathcal{X}=\partial/\partial u^i,\, \mathcal{Y}=\partial/\partial\dot{u}^j$. Then, by (\ref{torsinprop}), the condition reduces to $\nabla^D_{\partial/\partial u^i}(\partial/\partial u^j)=0$ and, by the definition of $\nabla^D$, this holds iff $D_{\partial/\partial u^i}(\partial/\partial u^j)=0$, where $D$ is the Levi-Civita connection of $s$ on $M$. Therefore, $D$ has zero curvature and we are done.
\end{proof}
\begin{example}\label{GcazKahler} {\rm If $M$ is a K\"ahler manifold with metric $g$ and K\"ahler form $\Omega$, $\mathcal{T}M$ has a nice generalized Sasaki metric, say $\mathcal{G}_\Omega$,, which satisfies the hypotheses of Proposition \ref{Cartan=LC} and is defined by taking $\sigma=g^h,\psi=\Omega^h$.}\end{example}

For another application, we refer to the Einstein-Cartan version of relativity theory \cite{Tr}. A differentiable manifold $N$ endowed with a (pseudo-)Riemannian metric $g$ and a $g$-metric connection $\nabla$ with torsion will be called an {\it Einstein-Cartan space}, and the metric is an {\it Einstein-Cartan metric}, if it satisfies the following {\it Einstein-Cartan equation}, which is a particular case of the symmetrized Einstein-Cartan-Sciama-Kibble equation (23) of \cite{Tr}
\begin{equation}\label{ECnew} Ric_\nabla(\mathcal{Z},\mathcal{Z}')
+Ric_\nabla(\mathcal{Z}',\mathcal{Z})=2\lambda\mathcal{G} (\mathcal{Z},\mathcal{Z}')
\hspace{3mm} (\mathcal{Z},\mathcal{Z}'\in TN).\end{equation}
In (\ref{ECnew}), $Ric_\nabla$ denotes the (not necessarily symmetric) Ricci curvature of the connection $\nabla$.

We calculate the Ricci tensor of the double metric Cartan connection of a generalized Sasaki metrics $\mathcal{G}$ under the supplementary hypothesis that the horizontal tensor $\beta$ is non degenerate. By (\ref{betacuQ}), this condition is equivalent to $rank(Id-Q^2)=m$, therefore to the fact that $Q^2$ does not have the eigenvalue $1$. From (\ref{relinH}) and (\ref{betacuQ}) we get
$$\beta(\mathcal{X},\mathcal{X})=\sigma(\mathcal{X},\mathcal{X}) +\sigma(Q\mathcal{X},Q\mathcal{X}),$$ hence, if $\sigma$ is positive definite, $\beta$ is non degenerate and positive definite too.

Then, there are local, horizontal, $\beta$-(pseudo-)orthonormal bases $(E_i^\beta)$ $(i=1,...,m)$ and local, horizontal, $\sigma$-(pseudo-)orthonormal bases $(E_i^\sigma)$ and
$(E_i^\beta,SE_i^\sigma)$ are local, tangent bases of $\mathcal{T}M$ where $E_i^\beta$ is not $\mathcal{G}$-perpendicular to $SE_i^\sigma$. The corresponding dual cotangent bases are $(\epsilon^i_\beta,\epsilon^i_\sigma\circ S')$ where $\epsilon^i_\beta,\epsilon^i_\sigma\in ann\,\mathcal{V}$ and $\epsilon^i_\beta(E_j^\beta)=\delta_j^i,\epsilon^i_\sigma(E_j^\sigma)=\delta_j^i$.

The Ricci tensor is the trace of the curvature operator, therefore,
$$ Ric_{\mathcal{D}^\mathcal{C}}(\mathcal{Z}',\mathcal{Z}) = \epsilon^i_\beta(R_{\mathcal{D}^\mathcal{C}}(E_i^\beta,\mathcal{Z})\mathcal{Z}')
+(\epsilon^i_\sigma\circ S')(R_{\mathcal{D}^\mathcal{C}}(SE_i^\sigma,\mathcal{Z})\mathcal{Z}').$$
Furthermore, with (\ref{curbCartan}), we get
\begin{equation}\label{cRicciCartan} \begin{array}{lcl}
Ric_{\mathcal{D}^{\mathcal{C}}}(\mathcal{Y},\mathcal{Z}) &=&\epsilon^i_\sigma(R_{\nabla^{\mathcal{C}}}(SE_i^\sigma, \mathcal{Z})(S'\mathcal{Y})),\vspace*{2mm}\\  Ric_{\mathcal{D}^{\mathcal{C}}}(\mathcal{X},\mathcal{Z}) &=&\epsilon^i_\beta(R_{\nabla^{\mathcal{C}}}(E_i^\beta, \mathcal{Z})\mathcal{X}) +(\epsilon^i_\sigma\circ S')(R_{\nabla^{\mathcal{C}}}(SE_i^\sigma, \mathcal{Z})\mathcal{X})\vspace{2mm}\\ &+& \epsilon^i_\sigma
(R_{\nabla^{\mathcal{C}}}(SE_i^\sigma, \mathcal{Z})(Q\mathcal{X})-QR_{\nabla^{\mathcal{C}}}(SE_i^\sigma, \mathcal{Z})\mathcal{X}),\end{array}\end{equation}
where $\mathcal{X}\in\mathcal{H},\mathcal{Y}\in\mathcal{V}$.
\begin{prop}\label{GEC} A generalized Sasaki metric such that the metric $\sigma$ is projectable to $M$ and $\beta$ is non degenerate is Einstein-Cartan iff $Ric_{{\mathcal{D}^\mathcal{C}}}=0$ and this condition is equivalent to the fact that the projection of $\sigma$ to $M$ is a Ricci flat metric. \end{prop}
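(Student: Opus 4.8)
The plan is to show that, under projectability of $\sigma$, the Ricci tensor $Ric_{\mathcal{D}^\mathcal{C}}$ collapses onto the horizontal lift of the Ricci tensor of the base metric, after which both asserted equivalences follow quickly. Write $\sigma=s^h$ with $s$ a metric on $M$. The first step is to reduce the connection: since $\sigma$ is projectable, the mechanism already used in the proof of Proposition \ref{Cartan=LC} applies, namely the Cartan tensor and the correction tensor $\Psi$ vanish, so the Cartan connection $\nabla^{\mathcal{C}}$ of (\ref{conexCartan}) coincides with the canonical Vr\u anceanu--Bott connection $\nabla^D$. Hence throughout the Ricci formulas (\ref{cRicciCartan}) I may replace $\nabla^{\mathcal{C}}$ by $\nabla^D$.

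The heart of the argument is two facts about $\nabla^D$ in the frame $(X_i,\partial/\partial y^i)$ of (\ref{Hbases}). (a) Because $\mathcal{V}$ is integrable and the brackets $[X_i,X_j]$ are vertical (the Ehresmann curvature), the Bott equation (\ref{BottV}) gives $\nabla^D_{\partial/\partial y^p}X_j=pr_{\mathcal{H}}[\partial/\partial y^p,X_j]=0$. (b) In the Koszul formula for $pr_{\mathcal{H}}D_{X_i}X_j$ every bracket contribution is vertical, hence $\gamma$-orthogonal to the horizontal test vector and drops out; what survives involves only $x$-derivatives of $g_{ij}(x)$, so the horizontal coefficients $\nabla^D_{X_i}X_j$ are exactly the Christoffel symbols of $s$ and depend on $x$ alone, independently of the (possibly $y$-dependent) nonlinear connection. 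Combining (a) and (b), every curvature of $\nabla^D$ with a vertical entry acting on a horizontal vector vanishes — both the two-vertical Bott case and the mixed case — whereas for purely horizontal arguments $R_{\nabla^D}(X_i,X_j)X_k$ reduces to the horizontal lift $(R^s(X_i,X_j)X_k)^h$ of the base curvature, with no Ehresmann remainder.

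Substituting this into (\ref{cRicciCartan}) kills almost everything. The block $Ric_{\mathcal{D}^\mathcal{C}}(\mathcal{Y},\mathcal{Y}')$ and both mixed blocks $Ric_{\mathcal{D}^\mathcal{C}}(\mathcal{X},\mathcal{Y})$, $Ric_{\mathcal{D}^\mathcal{C}}(\mathcal{Y},\mathcal{X})$ vanish, since every surviving curvature there carries a vertical entry acting on a horizontal vector; in the horizontal block the two terms built from $SE_i^\sigma$ and from $Q$ vanish for the same reason, leaving only $\epsilon^i_\beta(R_{\nabla^D}(E_i^\beta,\mathcal{X}')\mathcal{X})$. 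This is the trace of the horizontal endomorphism $\mathcal{W}\mapsto R_{\nabla^D}(\mathcal{W},\mathcal{X}')\mathcal{X}$ and is therefore independent of the frame used; evaluating it in a $\sigma$-orthonormal frame and inserting $R_{\nabla^D}=(R^s)^h$ identifies it with the horizontal lift $(Ric^s)^h(\mathcal{X},\mathcal{X}')$ of the Ricci tensor of $s$. Thus $Ric_{\mathcal{D}^\mathcal{C}}$ is symmetric and concentrated on $\mathcal{H}$, equal to $(Ric^s)^h$, so $Ric_{\mathcal{D}^\mathcal{C}}=0$ if and only if the projection $s$ of $\sigma$ is Ricci-flat.

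It then remains to match this with the Einstein--Cartan equation (\ref{ECnew}). If $Ric_{\mathcal{D}^\mathcal{C}}=0$, (\ref{ECnew}) holds with $\lambda=0$. Conversely, evaluating (\ref{ECnew}) on two vertical arguments makes its left-hand side vanish, while by (\ref{Grondcugamma}) the right-hand side equals $\lambda\,\sigma(S'\mathcal{Y},S'\mathcal{Y}')$; non-degeneracy of $\sigma$ forces $\lambda=0$, whereupon (\ref{ECnew}) asserts only that the symmetric part of $Ric_{\mathcal{D}^\mathcal{C}}$ vanishes, and since $Ric_{\mathcal{D}^\mathcal{C}}=(Ric^s)^h$ is already symmetric, $Ric_{\mathcal{D}^\mathcal{C}}=0$. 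I expect the main obstacle to be the curvature bookkeeping of the third paragraph: the careful verification that the Ehresmann curvature of the arbitrary horizontal bundle, together with all the $Q$- and $\beta$-dependent contributions, drops out of the trace, so that the a priori non-symmetric Ricci tensor of $\mathcal{D}^\mathcal{C}$ collapses exactly onto the symmetric horizontal lift of the base Ricci.
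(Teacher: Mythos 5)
Your proposal is correct and follows essentially the same route as the paper's proof: use projectability to get $\Psi=0$ and replace $\nabla^{\mathcal{C}}$ by $\nabla^D$, show that all curvature terms with a vertical entry acting on a horizontal vector vanish while the purely horizontal curvature is the lift of $R^s$, conclude that $Ric_{\mathcal{D}^{\mathcal{C}}}$ reduces to the symmetric horizontal block $[Ric_{D^\sigma}]^h$, and finally force $\lambda=0$ from the vertical block of (\ref{ECnew}) using $\mathcal{G}|_{\mathcal{V}}=\frac{1}{2}\sigma$. The only difference is one of completeness: you verify the key curvature identities $R_{\nabla^D}(\mathcal{Y},\mathcal{Y}')\mathcal{X}=0$, $R_{\nabla^D}(\mathcal{Y},\mathcal{X}')\mathcal{X}=0$ and $R_{\nabla^D}(\mathcal{X}',\mathcal{X}'')\mathcal{X}=[R_{D^\sigma}]^h$ by an explicit frame/Koszul computation, whereas the paper simply cites them (``check from the definitions or see the local calculations of \cite{V71}'').
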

\begin{proof} Since $\sigma$ is projectable, we have $\Psi=0$ and (\ref{twocurvatures}) shows that we may replace $R_{\nabla^{\mathcal{C}}}$ by $R_{\nabla^D}$ in (\ref{curbCartan}). The curvature of the canonical connection $\nabla^D$ always satisfies the condition $R_{\nabla^D}(\mathcal{Y},\mathcal{Y}')\mathcal{X}=0$. In the $\sigma$-projectable case, it also satisfies the condition $R_{\nabla^D}(\mathcal{Y},\mathcal{X}')\mathcal{X}=0$ and
$$R_{\nabla^D}(\mathcal{X}',\mathcal{X}'')\mathcal{X}= [R_{D^\sigma}(\pi_*\mathcal{X}',\pi_*\mathcal{X}'')(\pi_*\mathcal{X})]^h,$$ where $D^\sigma$ is the Levi-Civita connection of the projection of $\sigma$ to $M$ (check from the definitions or see the local calculations of \cite{V71}). Then, formulas (\ref{cRicciCartan}) yield
\begin{equation}\label{Ricinprop} Ric_{\mathcal{D}^{\mathcal{C}}}(\mathcal{Y},\mathcal{Z})=0,\,
Ric_{\mathcal{D}^{\mathcal{C}}}(\mathcal{X},\mathcal{Y})=0,\,
Ric_{\mathcal{D}^{\mathcal{C}}}(\mathcal{X}',\mathcal{X})=
[Ric_{D^\sigma}(\mathcal{X}',\mathcal{X})]^h.\end{equation}
(Notice that the trace required in the right hand side of the last equality (\ref{Ricinprop}) is calculated with the non $pr_M\sigma$-(pseudo-)orthonormal basis $E_i^\beta$.)
Since $\mathcal{G}|_{\mathcal{V}}=(1/2)\sigma$, the first equality (\ref{Ricinprop}) shows that the Einstein-Cartan condition (\ref{ECnew}) may hold only for $\lambda=0$. Then, (\ref{ECnew}) necessarily holds for pairs of arguments $(\mathcal{Y}',\mathcal{Y}),(\mathcal{X},\mathcal{Y})$ and it implies the vanishing of the symmetrized tensor $Ric_{\mathcal{D}^{\mathcal{C}}}(\mathcal{X}',\mathcal{X})+ Ric_{\mathcal{D}^{\mathcal{C}}}(\mathcal{X},\mathcal{X}')$. But the third equality (\ref{Ricinprop}) shows that $Ric_{\mathcal{D}^{\mathcal{C}}}(\mathcal{X}',\mathcal{X})$ is symmetric, hence, the Einstein-Cartan condition holds iff $Ric_{\mathcal{D}^{\mathcal{C}}}(\mathcal{X}',\mathcal{X})=0$, equivalently, $pr_M\sigma$ is Ricci flat.
The converse, i.e., that $Ric_{\mathcal{D}^{\mathcal{C}}}=0$ implies (\ref{ECnew}), is obvious; we just have to take $\lambda=0$.
\end{proof}
\begin{rem}\label{bracketconnection} {\rm
Another canonical double metric connection can be obtained
in the following way.
Let $\mathcal{D}^{\mathfrak{g}_{\mathbf{H}}}$ be the Levi-Civita connection of the almost para-Hermitian metric $\mathfrak{g}_{\mathbf{H}}$. Define a new connection $\tilde{\mathcal{D}}^{\mathfrak{g}_{\mathbf{H}}}$ by the formulas
$$
\tilde{\mathcal{D}}^{\mathfrak{g}_{\mathbf{H}}}_{\iota_\pm\mathcal{X}} (\iota_\pm\mathcal{X}')= pr_{V_\pm}\mathcal{D}^{\mathfrak{g}_{\mathbf{H}}}_{\iota_\pm\mathcal{X}} (\iota_\pm\mathcal{X}'),
\, \tilde{\mathcal{D}}^{\mathfrak{g}_{\mathbf{H}}}_{\iota_\pm\mathcal{X}} (\iota_\mp\mathcal{X}')=pr_{V_\mp}[\iota_\pm\mathcal{X}, \iota_\mp\mathcal{X}']_{\mathcal{H}},
$$ 
where $\mathcal{X},\mathcal{X}'\in\mathcal{H}$ and the horizontal metric bracket (\ref{Hbracket}) is transferred to $T\mathcal{T}M$ by $\mathfrak{i}$, i.e.,\footnote{Hereafter, this identification will be used whenever needed without further notice.}
$$[\mathcal{U},\mathcal{U}']_{\mathcal{H}}= \mathfrak{i}[\mathfrak{i}^{-1}\mathcal{U},\mathfrak{i}^{-1}\mathcal{U}']_{\mathcal{H}}, \hspace{2mm}\mathcal{U},\mathcal{U}'\in\chi(\mathcal{T}M).$$
We check that $\tilde{\mathcal{D}}^{\mathfrak{g}_{\mathbf{H}}}$ is a connection by using property (\ref{metriclin}) and the fact that $V_\pm$ are $\mathfrak{g}_{\mathbf{H}}$-orthogonal.

Now, if we take the connection
$$ \mathcal{D}^{\mathfrak{g}_{\mathbf{H}},\mathcal{G}}_{\mathcal{Z}}\mathcal{Z}'= \tilde{\mathcal{D}}^{\mathfrak{g}_{\mathbf{H}}}_{\mathcal{Z}}\mathcal{Z}' +\frac{1}{2}\sharp_{\mathfrak{g}_{\mathbf{H}}} \flat_{(\tilde{\mathcal{D}}^{\mathfrak{g}_{\mathbf{H}}}_{\mathcal{Z}} \mathfrak{g}_{\mathbf{H}})} \mathcal{Z}'\hspace{2mm}(\mathcal{Z},\mathcal{Z}'\in T\mathcal{T}M),$$
a straightforward calculation yields $\mathcal{D}^{\mathfrak{g}_{\mathbf{H}},\mathcal{G}}_{\mathcal{Z}} \mathfrak{g}_{\mathbf{H}}=0$. On the other hand, since $\tilde{\mathcal{D}}^{\mathfrak{g}_{\mathbf{H}}}$ preserves the subbundles $V_\pm$, we have
$$ \mathfrak{g}_{\mathbf{H}}(\sharp_{\mathfrak{g}_{\mathbf{H}}} \flat_{(\tilde{\mathcal{D}}^{\mathfrak{g}_{\mathbf{H}}}_{\mathcal{Z}} \mathfrak{g}_{\mathbf{H}})} \mathcal{Z}', \mathcal{U}) =\tilde{\mathcal{D}}^{\mathfrak{g}_{\mathbf{H}}}_{\mathcal{Z}} \mathfrak{g}_{\mathbf{H}}(\mathcal{Z}',\mathcal{U})=0$$
if $\mathcal{Z}'\in V_\pm,\mathcal{U}\in V_\mp$ ($\mathcal{Z}\in T\mathcal{T}M$).

This shows that the connection $\mathcal{D}^{\mathfrak{g}_{\mathbf{H}},\mathcal{G}}$ also preserves $V_\pm$, hence, it commutes with $H$ and it is a double metric connection.}\end{rem}
\subsection{Generalized Sasaki-K\"ahler metrics}
Now, we are going to relate the metrics with generalized complex structures. The motivation for the material considered in this section is the string theory significance of generalized complex and K\"ahler structures on the big tangent bundle $\mathbf{T}M$, e.g., \cite{Zab}. One may hope that similar structures on the bundle $\pi^{-1}\mathbf{T}M$ and the corresponding transferred structures to $T\mathcal{T}M$ will retain some of that significance.

Let $\mathcal{J}$ be a v.d.g. almost complex structure, represented by the matrix (\ref{J}) where the entries are seen as horizontal tensors, and let $\mathcal{G}$ be a generalized Sasaki metric. By the canonical identification $\mathfrak{i}$ defined by the subordinated, tangent metric $\gamma$ of $\mathcal{G}$, $\mathcal{J}$ goes to $\mathfrak{i}\mathcal{J}\mathfrak{i}^{-1}\mathcal{Z}$ where $\mathfrak{i}^{-1}$ is given by (\ref{ij}) and   $\mathfrak{i}(\mathcal{X},\alpha)=\mathcal{X}+S\sharp_\gamma\alpha$
($\mathcal{X}\in\mathcal{H},\alpha\in ann\,\mathcal{V}$). The result is the following almost complex structure on $\mathcal{T}M$, which we continue to denote by $\mathcal{J}$:
\begin{equation}\label{JpeTrond} \mathcal{J}\mathcal{Z}= Apr_{\mathcal{H}}\mathcal{Z}+\sharp_w\flat_\gamma(S'\mathcal{Z}) +S\sharp_\gamma(\flat_bpr_{\mathcal{H}}\mathcal{Z} -(\flat_\gamma(S'\mathcal{Z}))\circ A). \end{equation}
The structure $\mathcal{J}$ satisfies the $\mathfrak{g}_{\mathbf{H}}$-compatibility condition
\begin{equation}\label{compJGgotic}
\mathfrak{g}_{\mathbf{H}}(\mathcal{J}\mathcal{Z}, \mathcal{Z}')+	 \mathfrak{g}_{\mathbf{H}}(\mathcal{Z}, \mathcal{J}\mathcal{Z}')=0,
\end{equation} equivalently,
\begin{equation}\label{compJGgotic2} \gamma(P_{\mathcal{H}}\mathfrak{J}\mathcal{Z},\mathcal{Z}')+
\gamma(P_{\mathcal{H}}\mathfrak{J}\mathcal{Z}',\mathcal{Z})=0.\end{equation}

Conversely, any almost complex structure $\mathfrak{J}$ of $\mathcal{T}M$ that satisfies (\ref{compJGgotic})
may be identified with a v.d.g. almost complex structure $\mathcal{J}$.
Formula (\ref{JpeTrond}) suggests the following entries of the matrix (\ref{J}) of the required structure $\mathcal{J}$
$$A=pr_{\mathcal{H}}\circ\mathfrak{J},\,\flat_b=\flat_\gamma\circ S'\circ pr_{\mathcal{V}}\circ\mathfrak{J},\, \sharp_w=pr_{\mathcal{H}}\circ\mathfrak{J}\circ S\circ\sharp_\gamma.$$ The endomorphism of $\pi^{-1}(\mathbf{T}M)$ defined by the proposed matrix transfers to
$$\mathfrak{i}\mathcal{J}\mathfrak{i}^{-1}\mathcal{Z}=pr_{\mathcal{H}}\mathfrak{J}\mathcal{Z}
+pr_{\mathcal{V}}\mathfrak{J}pr_{\mathcal{H}}\mathcal{Z} -S\sharp_\gamma[(\flat_\gamma S'pr_{\mathcal{V}}\mathcal{Z})\circ pr_{\mathcal{H}}\circ\mathfrak{J}],$$
where last term is a vertical vector $\Theta$. If we calculate $\gamma(\Theta,\mathcal{Y})$ for $\mathcal{Y}\in\mathcal{V}$ and use the properties of $\gamma,S,S'$ and the compatibility relation
(\ref{compJGgotic2}), we deduce $\Theta=pr_{\mathcal{V}}\mathfrak{J}pr_{\mathcal{V}}\mathfrak{Z}$. Therefore, the total result is exactly $\mathfrak{J}\mathfrak{Z}$ and we are done.

Since $\mathfrak{g}_{\mathbf{H}}$ is a neutral metric, the almost complex structures of $\mathcal{T}M$ that satisfy condition (\ref{compJGgotic}) will be called {\it neutral almost complex structures}. Furthermore, if the pair $(\mathcal{G},\mathcal{J})$ is a v.d.g. almost Hermitian structure, the almost complex structure (\ref{JpeTrond}) is compatible with the generalized Sasakian metric $\mathcal{G}$ and $(\mathcal{G},\mathcal{J})$ is a {\it generalized, Sasakian, almost Hermitian structure}.

We shall transfer the integrability condition of the v.d.g. structure $\mathcal{J}$ to the corresponding neutral structure (\ref{JpeTrond}).
\begin{defin}\label{defJGintegr} {\rm A neutral almost complex structure $\mathcal{J}$ is {\it horizontally integrable}, and the word ``almost" is dropped, if the $\pm i$-eigenbundles of $\mathcal{J}$ are closed under the horizontal, metric bracket $[\,,\,]_{\mathcal{H}}$ (extended by complex linearity).}\end{defin}
\begin{prop}\label{propJGintegr} The neutral almost complex structure $\mathcal{J}$ is integrable iff one of the following equivalent conditions is satisfied:\\
\noindent \hspace*{1mm}(i) $\mathcal{N}^{\mathcal{H}}_{\mathcal{J}}(\mathcal{Z},\mathcal{Z}')
=[\mathcal{J}\mathcal{Z},\mathcal{J}\mathcal{Z}']_{\mathcal{H}}-\mathcal{J} [\mathcal{J}\mathcal{Z},\mathcal{Z}']_{\mathcal{H}} -\mathcal{J}[\mathcal{Z},\mathcal{J}\mathcal{Z}']_{\mathcal{H}} +\mathcal{J}^2[\mathcal{Z},\mathcal{Z}']_{\mathcal{H}}=0,$\vspace*{2mm}\\
\noindent (ii) $\mathcal{N}^{C}_{\tilde{\mathcal{J}}}(\mathfrak{i}^{-1}\mathcal{Z}, \mathfrak{i}^{-1}\mathcal{Z}')
=[\tilde{\mathcal{J}}\mathfrak{i}^{-1}\mathcal{Z},\tilde{\mathcal{J}} \mathfrak{i}^{-1}\mathcal{Z}']_C-\tilde{\mathcal{J}} [\tilde{\mathcal{J}}\mathfrak{i}^{-1}\mathcal{Z},\mathfrak{i}^{-1}\mathcal{Z}']_C \vspace*{2mm}\\ \noindent\hspace*{3cm}-\tilde{\mathcal{J}}[\mathfrak{i}^{-1}\mathcal{Z}, \mathfrak{i}^{-1} \tilde{\mathcal{J}}\mathfrak{i}^{-1}\mathcal{Z}']_C +\tilde{\mathcal{J}}^2 [\mathfrak{i}^{-1}\mathcal{Z},\mathfrak{i}^{-1}\mathcal{Z}']_C\in \mathbf{T}\mathcal{V}$,\vspace*{2mm}\\
\noindent
where $\mathcal{Z},\mathcal{Z}'\in\chi(\mathcal{T}M)$ and the symbol $\mathcal{N}$ is used to remind the Nijenhuis tensor. In (ii), the index $C$ stays for the usual Courant brackets on $\mathcal{T}M$ and, taking into account the decomposition $\mathbf{T}\mathcal{T}M=\mathbf{H}\oplus\mathbf{T}\mathcal{V}$, $\tilde{\mathcal{J}}\in End(\mathbf{T}\mathcal{T}M)$ is defined by
$$\tilde{\mathcal{J}}=\mathcal{J}\circ pr_{\mathbf{H}}+ \mathfrak{v}\circ\mathfrak{h}^{-1}\circ \mathcal{J}\circ\mathfrak{h}\circ\mathfrak{v}^{-1}\circ pr_{\mathbf{T}\mathcal{V}}.$$
\end{prop}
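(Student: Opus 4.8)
The plan is to establish the two equivalences separately: first that horizontal integrability in the sense of Definition \ref{defJGintegr} (closure of the $\pm i$-eigenbundles under $[\,,\,]_{\mathcal{H}}$) is equivalent to condition (i), and then that (i) is equivalent to the Courant-bracket condition (ii). The first equivalence is the metric-bracket analogue of the classical eigenbundle computation, and the second is a projection argument exploiting that $[\,,\,]_{\mathcal{H}}$ is the $\mathbf{H}$-component of the genuine Courant bracket.

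For the first equivalence I would decompose the complexified bundle into the $\pm i$-eigenbundles $L_\pm$ of $\mathcal{J}$ and extend $[\,,\,]_{\mathcal{H}}$ by $\mathbb{C}$-bilinearity. For $\mathcal{Z},\mathcal{Z}'\in\Gamma L_+$ (so $\mathcal{J}\mathcal{Z}=i\mathcal{Z}$, $\mathcal{J}\mathcal{Z}'=i\mathcal{Z}'$), substituting into the defining formula of $\mathcal{N}^{\mathcal{H}}_{\mathcal{J}}$ and using only $\mathbb{C}$-bilinearity, skew-symmetry of the bracket, and $\mathcal{J}^2=-Id$ yields $\mathcal{N}^{\mathcal{H}}_{\mathcal{J}}(\mathcal{Z},\mathcal{Z}') = -2([\mathcal{Z},\mathcal{Z}']_{\mathcal{H}}+i\mathcal{J}[\mathcal{Z},\mathcal{Z}']_{\mathcal{H}}) = -4\,pr_{L_-}[\mathcal{Z},\mathcal{Z}']_{\mathcal{H}}$, since $pr_{L_-}=\frac{1}{2}(Id+i\mathcal{J})$. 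The same substitution on mixed pairs $L_+\times L_-$ makes the four terms cancel identically, so $\mathcal{N}^{\mathcal{H}}_{\mathcal{J}}$ vanishes there, and because $[\,,\,]_{\mathcal{H}}$ and $\mathcal{J}$ are real we have $L_-=\overline{L_+}$, so the $L_-\times L_-$ case is the conjugate of the $L_+\times L_+$ one. Hence $\mathcal{N}^{\mathcal{H}}_{\mathcal{J}}=0$ iff $pr_{L_-}[\mathcal{Z},\mathcal{Z}']_{\mathcal{H}}=0$ for all $\mathcal{Z},\mathcal{Z}'\in\Gamma L_+$, i.e.\ iff $L_+$ (equivalently $L_-$) is closed under $[\,,\,]_{\mathcal{H}}$, which is exactly horizontal integrability. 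To read $\mathcal{N}^{\mathcal{H}}_{\mathcal{J}}$ as an honest tensor rather than a bilinear operation on sections, I would invoke the metric-bracket Leibniz rule (\ref{metriclin}) together with the $\mathfrak{g}_{\mathbf{H}}$-skewness (\ref{compJGgotic}) of $\mathcal{J}$: the anomalous $-\mathfrak{g}(e_1,e_2)\partial f$ contributions cancel because $\mathcal{J}$ is $\mathfrak{g}_{\mathbf{H}}$-orthogonal, exactly as in the standard generalized-geometry case \cite{Galt}.

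For the second equivalence the structural input is that, for the vertical foliation $\mathcal{V}$ with horizontal complement $\mathcal{H}$, the horizontal metric bracket is the $\mathbf{H}$-projection of the Courant bracket, $[\,,\,]_{\mathcal{H}}=pr_{\mathbf{H}}\circ[\,,\,]_C$ (formula (\ref{Hbracket})), and that $\tilde{\mathcal{J}}$ preserves the splitting $\mathbf{T}\mathcal{T}M=\mathbf{H}\oplus\mathbf{T}\mathcal{V}$, restricting to $\mathcal{J}$ on $\mathbf{H}$; consequently $pr_{\mathbf{H}}$ commutes with $\tilde{\mathcal{J}}$. Writing $\mathfrak{i}^{-1}\mathcal{Z},\mathfrak{i}^{-1}\mathcal{Z}'\in\Gamma\mathbf{H}$ and projecting the four terms of $\mathcal{N}^{C}_{\tilde{\mathcal{J}}}$ onto $\mathbf{H}$, each Courant bracket of two horizontal sections projects to the corresponding horizontal metric bracket while each $\tilde{\mathcal{J}}$ passes through $pr_{\mathbf{H}}$ and reduces to $\mathcal{J}$; I thus expect to obtain $pr_{\mathbf{H}}\,\mathcal{N}^{C}_{\tilde{\mathcal{J}}}(\mathfrak{i}^{-1}\mathcal{Z},\mathfrak{i}^{-1}\mathcal{Z}') = \mathfrak{i}^{-1}\mathcal{N}^{\mathcal{H}}_{\mathcal{J}}(\mathcal{Z},\mathcal{Z}')$ under the identification $\mathfrak{i}$. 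Since (ii) asserts precisely that $\mathcal{N}^{C}_{\tilde{\mathcal{J}}}$ has no $\mathbf{H}$-component while constraining its $\mathbf{T}\mathcal{V}$-part not at all, this projection identity gives (ii)$\Leftrightarrow$(i). Notably, the explicit vertical block of $\tilde{\mathcal{J}}$ never enters, since only the $\mathbf{H}$-component of the torsion is tested.

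I expect the main obstacle to be the tensoriality bookkeeping in the first step rather than the projection in the second. Because $[\,,\,]_{\mathcal{H}}$ is not a Lie bracket — it obeys (\ref{metriclin}) with the extra $\partial$-term and fails Jacobi — I must verify carefully that $\mathcal{N}^{\mathcal{H}}_{\mathcal{J}}$ is genuinely a tensor, relying on the $\mathfrak{g}_{\mathbf{H}}$-skewness of $\mathcal{J}$ to annihilate the anomalous terms, and I must likewise confirm that the neutrality of the Courant metric makes $\mathcal{N}^{C}_{\tilde{\mathcal{J}}}$ tensorial so that condition (ii) is well posed independently of the chosen section extensions. A secondary point needing care is the consistent transport of $\mathcal{J}$, of the bracket, and of the Nijenhuis operation through $\mathfrak{i}$, so that the two tensors compared in the projection identity genuinely correspond.
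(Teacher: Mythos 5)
Your proposal is correct and follows essentially the same route as the paper: the paper also establishes the equivalence with (i) by evaluating $\mathcal{N}^{\mathcal{H}}_{\mathcal{J}}$ on pairs of $\pm i$-eigenvectors, and the equivalence with (ii) by the fact that the Courant bracket of $\mathbf{T}\mathcal{T}M$ splits into the horizontal and vertical brackets while $\tilde{\mathcal{J}}$ preserves the splitting $\mathbf{H}\oplus\mathbf{T}\mathcal{V}$, which is exactly your projection identity. The one point you flag but leave unverified --- that $\tilde{\mathcal{J}}$ is a genuine generalized almost complex structure so that (ii) is well posed --- is precisely what the paper checks first, using $\tilde{\mathcal{J}}^2=-Id$ together with formula (\ref{vh-1}), which shows that $\mathfrak{h}\circ\mathfrak{v}^{-1}$ transfers $\mathfrak{g}|_{\mathbf{T}\mathcal{V}}$ into $\mathfrak{g}|_{\mathbf{H}}$ and hence that $\tilde{\mathcal{J}}$ is compatible with the neutral metric $\mathfrak{g}$.
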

\begin{proof} Obviously, $\tilde{\mathcal{J}}^2=-Id$. On the other hand, for the neutral metric $\mathfrak{g}$ of $\mathbf{T}\mathcal{T}M$ we have $\mathfrak{g}=\mathfrak{g}|_{\mathbf{H}}+\mathfrak{g}|_{\mathbf{T}\mathcal{V}}$ and (\ref{vh-1}) shows that $\mathfrak{h}\circ\mathfrak{v}^{-1}$ transfers the metric $\mathfrak{g}|_{\mathbf{T}\mathcal{V}}$ into $\mathfrak{g}|_{\mathbf{H}}$. Then, since $\mathcal{J}$ is compatible with $\mathfrak{g}_{\mathbf{H}}$, it follows that $\tilde{\mathcal{J}}$ is compatible with $\mathfrak{g}$, hence, $\tilde{\mathcal{J}}$ is a generalized almost complex structure on $\mathbf{T}\mathcal{T}M$ and $\mathcal{N}^{C}_{\tilde{\mathcal{J}}}$ is its Courant-Nijenhuis torsion \cite{Galt}. The equivalence between horizontal integrability of $\mathcal{J}$ and (i) follows by evaluating the {\it horizontal Courant-Nijenhuis torsion} $\mathcal{N}_{\mathcal{J}}^{\mathcal{H}}$ on pairs of $\pm i$-eigenvectors. The equivalence between (i) and (ii) follows from the fact that the Courant bracket on $\mathbf{T}\mathcal{T}M$ is the sum of the horizontal and the vertical brackets. \end{proof}

The horizontal integrability conditions may be expressed in terms of the entries of the matrix (\ref{J}).
\begin{prop}\label{Crainich} The horizontal almost complex structure $\mathcal{J}$ of (\ref{JpeTrond}) is integrable iff it satisfies the conditions
\begin{equation}\label{CrainicH} \begin{array}{l}
pr_{\wedge^3\mathcal{H}}[w,w]=0,\,pr_{\mathcal{H}}\circ R_{(w,A)}=0, \vspace*{2mm}\\
pr_{\mathcal{H}}\{\mathcal{N}_A(\mathcal{X},\mathcal{X}')	 -\sharp_w[(i(\mathcal{X}')i(\mathcal{X})d'b)]\}=0, \vspace*{2mm}\\ d'b_A(\mathcal{X},\mathcal{X}',\mathcal{X}'')= \sum_{Cycl(\mathcal{X},\mathcal{X}',\mathcal{X}'')} d'b(A\mathcal{X},\mathcal{X}',\mathcal{X}''),
\end{array}
\end{equation} where $R$ is the Schouten concomitant, the calculus operations are on $\mathcal{T}M$ and the arguments are horizontal.\end{prop}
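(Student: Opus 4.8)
The plan is to invoke criterion (i) of Proposition \ref{propJGintegr}, which reduces the horizontal integrability of $\mathcal{J}$ to the vanishing of the horizontal Courant--Nijenhuis torsion $\mathcal{N}^{\mathcal{H}}_{\mathcal{J}}$, and then to compute $\mathcal{N}^{\mathcal{H}}_{\mathcal{J}}$ explicitly in terms of the block entries $A,w,b$ of the matrix (\ref{J}). The crucial structural observation is that the horizontal metric bracket (\ref{Hbracket}) has exactly the same algebraic form as the classical Courant bracket of a big tangent bundle, the only differences being the replacement of the Lie bracket by $pr_{\mathcal{H}}[\,,\,]$ and of the exterior derivative $d$ by $d'=d|_{\mathcal{H}}$. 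Consequently, the purely formal manipulations that yield the integrability conditions for a generalized almost complex structure on $\mathbf{T}N$ in the Courant case \cite{Galt} can be repeated on $\mathbf{H}=\mathcal{H}\oplus\mathcal{H}^*$, provided one systematically keeps the horizontal operators $d'$ and $pr_{\mathcal{H}}$ in place of their global counterparts.

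Concretely, I would evaluate $\mathcal{N}^{\mathcal{H}}_{\mathcal{J}}$ on the three types of pairs of arguments arising from the splitting $\mathbf{H}=\mathcal{H}\oplus\mathcal{H}^*$ (two horizontal covectors; one horizontal vector and one horizontal covector; two horizontal vectors), and in each case extract the $\mathcal{H}$- and $\mathcal{H}^*$-components of the resulting section of $\mathbf{H}$, equivalently pair the remaining slot with a horizontal covector or vector via $\mathfrak{g}_{\mathbf{H}}$. Inserting the block form (\ref{J}) and the bracket (\ref{Hbracket}) and collecting terms according to the number of vector arguments, the $\mathcal{H}$-component on two covectors reproduces the Poisson-type condition $pr_{\wedge^3\mathcal{H}}[w,w]=0$; the mixed arguments produce the vanishing of the Schouten concomitant, $pr_{\mathcal{H}}\circ R_{(w,A)}=0$; the $\mathcal{H}$-component on two vectors gives the corrected Nijenhuis condition $pr_{\mathcal{H}}\{\mathcal{N}_A(\mathcal{X},\mathcal{X}')-\sharp_w[i(\mathcal{X}')i(\mathcal{X})d'b]\}=0$; and its $\mathcal{H}^*$-component yields the last condition $d'b_A(\mathcal{X},\mathcal{X}',\mathcal{X}'')=\sum_{Cycl}d'b(A\mathcal{X},\mathcal{X}',\mathcal{X}'')$. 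Thus the four equations of (\ref{CrainicH}) are precisely the four homogeneous tensor components of $\mathcal{N}^{\mathcal{H}}_{\mathcal{J}}=0$.

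The main obstacle, and the step requiring genuine care, is that $\mathbf{H}$ is only a metric algebroid and not a Courant algebroid, as emphasized just before Definition \ref{defgenmetric}: neither does $pr_{\mathcal{H}}[\,,\,]$ satisfy the Jacobi identity, nor does $d'$ satisfy $d'^2=0$, so the classical derivation cannot be quoted blindly. The key fact to verify is that all the extra terms produced by these defects --- those involving the Ehresmann curvature $R_{\mathcal{H}}$ measuring the non-involutivity of $\mathcal{H}$, and those coming from $d'^2\neq 0$ --- fall outside the projections $pr_{\mathcal{H}}$ and $pr_{\wedge^3\mathcal{H}}$ and hence drop out of the conditions. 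This is exactly why the projections appear explicitly in (\ref{CrainicH}), where in the genuine Courant case they would be superfluous. I would confirm this by first checking that $\mathcal{N}^{\mathcal{H}}_{\mathcal{J}}$ is $C^\infty(\mathcal{T}M)$-bilinear, so that it is a genuine tensor whose value may be computed on convenient local sections; this tensoriality follows from the metric-algebroid linearity property (\ref{metriclin}). Then, tracking each term in the expansion, one sees that the horizontal components of $\mathcal{N}^{\mathcal{H}}_{\mathcal{J}}$ depend on $d'$ and $pr_{\mathcal{H}}$ only through the combinations displayed in (\ref{CrainicH}), which completes the identification.
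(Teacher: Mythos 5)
Your starting point (criterion (i) of Proposition \ref{propJGintegr}) and your tensoriality argument via (\ref{metriclin}) are both sound, and your matching of the four conditions to components of the torsion agrees with what the paper obtains. But the proposal has a genuine gap at the decisive point, namely the ``if'' direction. You assert that the four equations of (\ref{CrainicH}) are ``precisely the four homogeneous tensor components of $\mathcal{N}^{\mathcal{H}}_{\mathcal{J}}=0$''. They are not: $\mathcal{N}^{\mathcal{H}}_{\mathcal{J}}$ takes values in $\mathbf{H}=\mathcal{H}\oplus\mathcal{H}^*$ and is evaluated on three types of argument pairs, so it has \emph{six} homogeneous components, and (\ref{CrainicH}) records only four of them --- the $\mathcal{H}^*$-components on pairs of type $((0,\alpha),(0,\beta))$ and $((\mathcal{X},0),(0,\alpha))$ are unaccounted for. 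Hence your computation, even if carried out flawlessly, proves only that integrability implies (\ref{CrainicH}); the converse requires the nontrivial fact that these four components force the remaining two to vanish. That fact is exactly the ``moreover'' clause of Crainic's Proposition 2.2, quoted in the paper right after (\ref{prCrainic}). The paper can cite it verbatim because its proof never works inside the metric algebroid $\mathbf{H}$ at all: it uses criterion (ii) rather than (i), passes to the structure $\tilde{\mathcal{J}}$ with tensors $\tilde{A}=A\oplus A'$, $\tilde{w}=w\oplus w'$, $\tilde{b}=b\oplus b'$ on the genuine Courant algebroid $\mathbf{T}\mathcal{T}M$, applies Crainic's formulas for the honest Courant bracket there, and reads (\ref{CrainicH}) off as the $\mathbf{H}$-projections of those formulas on horizontal arguments. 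In your setting, where $[\,,\,]_{\mathcal{H}}$ is only a metric-algebroid bracket, Crainic's sufficiency statement cannot be quoted and would have to be re-proved from scratch (presumably from the $\mathfrak{g}_{\mathbf{H}}$-symmetries of the torsion, which follow from the metric property (\ref{metricbr}) of the horizontal bracket); your proposal contains no such step.

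A second, related flaw lies in your ``key fact to verify''. The failure of the Jacobi identity for $[\,,\,]_{\mathcal{H}}$ is a red herring: the expansion of $\mathcal{N}^{\mathcal{H}}_{\mathcal{J}}$ contains no iterated brackets, so Jacobi never enters the computation of the components. The defect $d'^2\neq0$ is real, but your proposed mechanism for disposing of it fails: such anomalies are horizontal-valued, not vertical --- for instance $d'^2f(\mathcal{X},\mathcal{X}')=(pr_{\mathcal{V}}[\mathcal{X},\mathcal{X}'])f$ is a horizontal $2$-form built from the Ehresmann curvature --- so they lie \emph{inside} the range of $pr_{\mathcal{H}}$ and $pr_{\wedge^3\mathcal{H}}$ and are not killed by those projections. (The projections in (\ref{CrainicH}) are there for a different reason: the invariants $[w,w]$, $R_{(w,A)}$, $\mathcal{N}_A$ are computed with the full calculus of $\mathcal{T}M$, and only their horizontal parts are constrained, which is precisely the bookkeeping of projecting $\mathcal{N}^{C}_{\tilde{\mathcal{J}}}$ onto $\mathbf{H}$.) Whatever cancellation leaves the final formulas free of curvature terms would have to be exhibited by hand in your intrinsic approach; avoiding exactly this labor is the point of the paper's detour through $\mathbf{T}\mathcal{T}M$ and criterion (ii).
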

\begin{proof} We recall the definition of the Schouten concomitant \cite{V2007}:
$$R_{(w,A)}(X,\alpha)=\sharp_w[L_X(\alpha\circ A)-L_{AX}\alpha]-(L_{\sharp_w\alpha}A)(X).$$
In \cite{Cr}, during the proof of Proposition 2.2, it was shown that, on any manifold $M$, a generalized almost complex structure $\tilde{J}\in End(\mathbf{T}M)$ represented by a matrix (\ref{J}), where the entries  are tensors $\tilde{A},\tilde{w},\tilde{b}$ on $M$, satisfies the following conditions (up to notation)
\begin{equation}\label{prCrainic} \begin{array}{l} <\gamma,pr_{TM}\mathcal{N}^C_{\tilde{J}}((0,\alpha),(0,\beta))>=
[\tilde{w},\tilde{w}](\alpha,\beta,\gamma),\vspace*{2mm}\\ pr_{TM}\mathcal{N}^C_{\tilde{J}}((X,0),(0,,\alpha)) =R_{(\tilde{w},\tilde{A})}(X,\alpha),\vspace*{2mm}\\  pr_{TM}\mathcal{N}^C_{\tilde{J}}((X,0),(Y,0))= \mathcal{N}_{\tilde{A}}(X,Y)- \sharp_{\tilde{w}}[i(Y)i(X)d\tilde{b}],\vspace*{2mm}\\ <pr_{T^*M}\mathcal{N}^C_{\tilde{J}}((X,0),(Y,0)),Z>= d\tilde{b}_{\tilde{A}}(X,Y,Z) \vspace*{2mm}\\ \hspace*{4cm}-\sum_{Cycl(X,Y,Z)}d\tilde{b}(\tilde{A}X,Y,Z).
\end{array}\end{equation} In (\ref{prCrainic}), $\alpha,\beta,\gamma\in\Omega^1(M)$ and $X,Y,Z\in\chi(M)$. Moreover, it was shown that the vanishing of the four projections that enter in formulas (\ref{prCrainic}) implies the vanishing of the Courant-Nijenhuis torsion of $\tilde{J}$.

For the structure $\tilde{\mathcal{J}}$ defined in Proposition \ref{propJGintegr} the manifold is $\mathcal{T}M$ and the tensor fields of (\ref{prCrainic}) are $\tilde{A}=A\oplus A',\tilde{w}=w\oplus w',\tilde{b}=b\oplus b'$. The sum corresponds to $T\mathcal{T}M=\mathcal{H}\oplus\mathcal{V}$ and the prime denotes the transfer from horizontal to vertical by $\mathfrak{h}\circ\mathfrak{v}^{-1}$. The resulting formulas together with (ii), Proposition \ref{propJGintegr}, yield the integrability conditions (\ref{CrainicH}).\end{proof}
\begin{example}\label{exliftJ} {\rm A generalized almost complex structure $J$ on $M$ has a horizontal lift, which defines a neutral almost complex structure $J^h$ on $\mathcal{T}M$. Since $J^h$ is projectable to $M$, the local coordinate expression of the integrability conditions (\ref{CrainicH}) of $J^h$ is the same as that of the integrability conditions of $J$ on $M$. Hence, $J^h$ is horizontally integrable iff $J$ is integrable.}\end{example}
\begin{example}\label{vdcomplex} {\rm Consider a v.d.g. almost complex structure
where the entries $w,b$ of the matrix (\ref{J}) are zero and $A=J$. Then, $J$ is a velocity dependent almost complex structure, which we will see as a horizontal tensor with $J^2=-Id$. This structure is {\it horizontally complex (integrable)} if
\begin{equation}\label{strcomplh} pr_{\mathcal{H}}[J\mathcal{X},J\mathcal{X}']=0
\;\Leftrightarrow\;[\tilde{J}\mathcal{X},\tilde{J}\mathcal{X}'] \in\mathcal{V},
\end{equation} where $X,X'\in\mathcal{H}$ and $\tilde{J}=J\oplus(SJS')$ with $\oplus$ referring to the decomposition $T\mathcal{T}M=\mathcal{H}\oplus\mathcal{V}$. As in the usual case, condition (\ref{strcomplh}) is equivalent with the fact that the $\pm i$-eigenbundles $E_\pm$ of $J$ are closed with respect to the horizontal projection of the Lie bracket. If $J$ is projectable, this means that $J$ is a transversal complex structure of the foliation $\mathcal{V}$. If $J=j^h+(\alpha_iy^i)f^h$, where $j$ is an almost complex structure on $M$, $f$ is a $2$-nilpotent endomorphism of $TM$ that anticommutes with $j$ and $\alpha\in\Omega^1(M)$, then, $J^2=-Id$ and the horizontal integrability condition is equivalent to the system of conditions
$$ 
\begin{array}{l} pr_{\mathcal{H}}\mathcal{N}_{j^h}=0,\,pr_{\mathcal{H}}\mathcal{N}_{f^h}=0,\vspace*{2mm}\\ 

[j^h\mathcal{X},f^h\mathcal{X}']_{\mathcal{H}}+ [f^h\mathcal{X},j^h\mathcal{X}']_{\mathcal{H}} -j^h([f^h\mathcal{X},\mathcal{X}']_{\mathcal{H}}+ [\mathcal{X},f^h\mathcal{X}']_{\mathcal{H}}) 
\vspace*{2mm}\\ 
\hspace*{2cm} 
-f^h([j^h\mathcal{X},\mathcal{X}']_{\mathcal{H}}+[\mathcal{X}, j^h\mathcal{X}']_{\mathcal{H}})=0.
\end{array}$$
}\end{example}

\begin{example}\label{dvsympl} {\rm
A v.d. horizontally symplectic structure is a horizontal, non-degenerate $2$-form $\omega$ such that $d'\omega=0$. As in usual generalized geometry \cite{Galt}, the matrix (\ref{J}) with $A=0,b=\omega,w=\omega^{-1}$ defines a
v.d.g. structure $\mathcal{J}$ that is horizontally integrable.}\end{example}

Finally, we may give the following definition.
\begin{defin}\label{defKgenhor} {\rm The generalized Sasaki, almost Hermitian structure $(\mathcal{G},\mathcal{J})$ is a {\it generalized Sasaki-K\"ahler structure} if the structure $\mathcal{J}$ and its complementary structure $\mathcal{J}'$ are horizontally integrable.}\end{defin}
\begin{example}\label{exgenKhoriz} {\rm Consider the horizontal lift of all the tensors that define a generalized K\"ahler metric of $M$ \cite{Galt} from $M$ to $\mathcal{T}M$. Everything in this lift is projectable and the generalized Sasaki-K\"ahler conditions for the lift coincides with the generalized K\"ahler conditions for the original structure of $M$. Therefore, the horizontal lift is a generalized Sasaki-K\"ahler structure.}\end{example}

{\small Department of Mathematics, University of Haifa, Israel. E-mail: vaisman@math.haifa.ac.il}
\end{document}